\newcommand{\n}{\noindent}
\newcommand{\q}{\quad}
\newcommand{\E}{\mathrm {I\!E}}
\newcommand{\al}{\alpha}
\newcommand{\f}{\infty}
\newcommand{\si}{\sigma}
\newcommand{\be}{\beta}
\numberwithin{equation}{section}
\numberwithin{equation}{section}
\newcommand{\beq}{\begin{equation}}
\newcommand{\eeq}{\end{equation}}
\newcommand{\bea}{\begin{eqnarray}}
\newcommand{\eea}{\end{eqnarray}}
\newcommand{\beaa}{\begin{eqnarray*}}
\newcommand{\eeaa}{\end{eqnarray*}}
\newtheorem{Thm}{Theorem}[section]
\newtheorem{Lem}{Lemma}[section]
\def\blfootnote{\xdef\@thefnmark{}\@footnotetext}
\theoremstyle{definition}
\theoremstyle{remark}
\newtheorem{Rem}{\bf{Remark}}[section]
\newcommand{\ConvD}{\overset{d}{\rightarrow}}
\newcommand{\ConvFDD}{\overset{f.d.d.}{\longrightarrow}}
\newcommand{\EqFDD}{\overset{f.d.d.}{=}}
\newcommand{\EqD}{\overset{d}{=}}
\newcommand{\Cov}{\mathrm{Cov}}
\newcommand{\Var}{\mathrm{Var}}
\begin{document}

\title{Functional Limit Theorems for Toeplitz Quadratic
Functionals of Continuous time Gaussian Stationary Processes}

\author{Shuyang Bai, Mamikon S. Ginovyan, Murad S. Taqqu\\ Boston University}

\maketitle
\date

\begin{abstract}
\noindent The paper establishes weak convergence in $C[0,1]$ of
normalized stochastic processes, generated by Toeplitz type quadratic
functionals of a continuous time Gaussian stationary process,
%$\{X(t), \ t\in\mathbb{R}\}$,
exhibiting long-range dependence.
Both central and non-central functional limit theorems are obtained.
%Both the spectral density of the process and the generating
%function of the quadratic functional are assumed to be regularly
%varying at the origin of orders $\alpha$ and $\beta$, respectively.
%We show that it is the sum $\al+\be$ which determines the limiting process,
%and prove that  when $\al+\be\le 1/2$ the limit is a normalized standard Brownian motion, while when $\alpha+\beta>1/2$, the limit is a non-Gaussian self-similar process,
%which can be represented as a Wiener-It\^o integral on $\mathbb{R}^2$.
%The  results and methods extend and develop those
%in Ginovyan and Sahakyan (2007) and Terrin and Taqqu (1990).}
%(Journal of Theoretical Probability 3(3): 449-475, 1990) for discrete time processes.
\end{abstract}

\vskip3mm
\n
{\bf Key words.} Stationary Gaussian process %- Spectral density
- Toeplitz-type quadratic functional - Brownian motion - Noncentral limit theorem -
Long memory - Wiener-It\^o integral.

%\subclass{Primary 60G10, 60F05, Secondary 60G15}

\section{Introduction}
\label{Int}

Let $\{X(t), \ t\in\mathbb{R}\}$ be a centered real-valued
stationary Gaussian process with spectral density $f(x)$
and covariance function $r(t)$, that is,
$r(t)=\widehat f(t)=\int_\mathbb{R} e^{ix t}\,f(x)\,dx,
~ t\in\mathbb{R}.$
\n We are interested in describing the limit (as $T\to\f$) of the following
process, generated by Toeplitz type quadratic functionals of the process $X(t)$:
\begin{equation}\label{eq:quad proc}
Q_T(t)=\int_0^{Tt}\int_0^{Tt}\widehat g(u-v)X(u)X(v)\,du\,dv,\quad t\in[0,1],
\end{equation}
where
\begin{equation}\label{eq:g hat}
\widehat g(t)=\int_\mathbb{R} e^{ix t}\,g(x)\,dx, \qquad t\in\mathbb{R},
\end{equation}
is the Fourier transform of some integrable even function $g(x)$,
$x\in\mathbb{R}$. We will refer to $g(x)$ and to its Fourier transform
$\widehat g(t)$ as a generating function and generating kernel
for the process $Q_T(t)$, respectively.

The limit of the process (\ref{eq:quad proc}) is completely
determined by the spectral density $f(x)$ (or covariance function $r(t)$)
and the generating function $g(x)$ (or generating kernel $\widehat g(t)$),
and depending on their properties, the limit can be either
Gaussian (that is, $Q_T(t)$ with an appropriate normalization obeys a
central limit theorem), or non-Gaussian.
The following two questions arise naturally:
\begin{itemize}
\item[(a)]
Under what conditions on $f(x)$ (resp. $r(t)$) and $g(x)$ (resp.
$\widehat g(t)$) will the limit be Gaussian?
\item[(b)] Describe the limit process, if it is non-Gaussian.
\end{itemize}

%Similar questions were considered, for example, by Fox and Taqqu  \cite{FT1} and Terrin and Taqqu  \cite{terrin:taqqu:1990:noncentral} in the discrete time case. Here we work in continuous time,  and obtain in the limit identical  processes by using simpler arguments. The limit processes can be Gaussian or non-Gaussian. We establish weak convergence in $C[0,1]$ and also allow key parameters to assume critical values.

Similar questions were considered by Fox and Taqqu \cite{FT1},
Ginovyan and Sahakyan \cite{GS1}, and Terrin and Taqqu
\cite{terrin:taqqu:1990:noncentral} in the discrete time case.

Here we work in continuous time, and establish weak convergence
in $C[0,1]$ of the process (\ref{eq:quad proc}).
The limit processes can be Gaussian or non-Gaussian.
The limit non-Gaussian process is identical to that of in the discrete time case,
obtained in \cite{terrin:taqqu:1990:noncentral}.
%Also here, we allow key parameters to assume critical values.

But first some brief history. The question (a) goes back to the classical monograph by Grenander and
Szeg\"o \cite{GS}, where the problem was considered for discrete
time processes, as an application of the authors' theory of the
asymptotic behavior of the trace of products of truncated Toeplitz
matrices (see \cite{GS}, p. 217-219).
Later the question (a) was studied by Ibragimov \cite{I} and Rosenblatt
\cite{R2}, in connection to the statistical estimation of the spectral function
$F(x)$ and covariance function $r(t)$, respectively.
Since 1986, there has been a renewed interest in both questions (a) and (b),
related to the statistical inferences for long memory processes
(see, e.g., Avram \cite{A}, Fox and Taqqu \cite{FT1}, Ginovyan and
Sahakyan \cite{GS1}, Ginovyan et al. \cite{GST}, Giraitis et al.
\cite{giraitis:koul:surgailis:2009:large}, Giraitis and Surgailis \cite{GSu},
Giraitis and Taqqu \cite{giraitis:taqqu:2001:functional}, Terrin and Taqqu \cite{TT3},
%Taniguchi \cite{Ta1},
Taniguchi  and Kakizawa \cite{TK}, and references therein).
In particular, Avram \cite{A}, Fox and Taqqu \cite{FT1},
Giraitis and Surgailis \cite{GSu}, Ginovyan and Sahakyan \cite{GS1}
have obtained sufficient conditions for the Toeplitz type quadratic forms $Q_T(1)$
to obey the central limit theorem (CLT), when the model $X(t)$ is
a discrete time process.

For continuous time processes the question (a) was studied in
Ibragimov \cite{I} (in connection to the statistical estimation
of the spectral function), Ginovyan \cite{G1,G3}, Ginovyan and Sahakyan
\cite{GS2} and Ginovyan et al. \cite{GST}, where sufficient
conditions in terms of $f(x)$ and $g(x)$ ensuring central
limit theorems for quadratic functionals $Q_T(1)$ have been obtained.

The rest of the paper is organized as follows.
In Section \ref{sec:main result} we state the main results of this paper
(Theorems \ref{Thm:CLT} - \ref{Thm:NCLT}).
In Section \ref{sec:prelim} we prove a number of preliminary lemmas that
are used in the proofs of the main results.
Section \ref{sec:proof of main} contains the proofs of the main results.

Throughout the paper the letters $C$ and $c$ with or without indices
will denote positive constants whose values can change from line to line.

\section{The Main Results}\label{sec:main result}
\label{Int}
In this section we state our main results. Throughout the paper we assume
that $f, g\in L^1(\mathbb{R})$, and with no loss of generality,
that $g\ge0$ (see \cite{GS2}, \cite{GSu}).

We first examine the case of \emph{central limit theorems}, and consider the following standard normalized version of (\ref{eq:quad proc}):
\begin{equation}\label{eq:tilde Q_T(t)}
 \widetilde Q_T(t) :=T^{-1/2}\left(Q_T(t)-\E[Q_T(t)]\right), \quad t\in[0,1].
\end{equation}

%We first state a basic condition which guarantees
%the asymptotic normality of the normalized quadratic form
%$\widetilde Q_T(t)$ (see \cite{GS2}, Theorem 1): The condition
%\begin{equation}\label{eq:CLT condition}
%f\cdot g\in L^1(\mathbb{R})\cap L^2(\mathbb{R})\quad \text{ and }\quad
%\E \widetilde{Q}_T^2 \rightarrow \sigma^2=16\pi^3\int_\infty^\infty f^2(x) g^2(x)dx ~\text{ as }T\rightarrow\infty.
%\end{equation}
%Note that by Cauchy-Schwartz inequality, a sufficient condition for $f\cdot g\in L^1(\mathbb{R})$ is that $f$ and $g\in L^2(\mathbb{R})$. The condition $f\cdot g\in L^2(\mathbb{R})$ guarantees that $\sigma^2<\infty$.
%\begin{Rem}
%By the proof of Theorem 5 in \cite{GS2},
%if the assumptions on $f$ and $g$ in Theorem A are fulfilled, then the condition (%\ref{eq:CLT condition}) holds.
%\end{Rem}

Our first result , which is an extension of Theorem 1 of \cite{GS2},
involves the convergence of finite-dimensional distributions of the process
$\widetilde Q_T(t)$ to that of a standard Brownian motion.
\begin{Thm}\label{Thm:CLT}
Assume that the spectral density $f(x)$ and the generating function $g(x)$
satisfy the following conditions:
\begin{equation}
\label{eq:CLT condition 1}
f\cdot g\in L^1(\mathbb{R})\cap L^2(\mathbb{R})
\end{equation}
and
\begin{equation}\label{eq:CLT condition 2}
\E [\widetilde{Q}_T^2(1)] \rightarrow 16\pi^3\int_\infty^\infty f^2(x) g^2(x)dx
~\text{ as }T\rightarrow\infty.
\end{equation}
Then we have the following convergence of finite-dimensional distributions
\[
 \widetilde Q_T(t) \ConvFDD \sigma B(t),
\]
%where $\widetilde Q_T(t)$ is as in (\ref{eq:tilde Q_T(t)}), $\sigma$ is as in (\ref{eq:sigma}), and $B(t)$ is a standard Brownian motion.
where $\widetilde Q_T(t)$ is as in (\ref{eq:tilde Q_T(t)}),
$B(t)$ is a standard Brownian motion, and
\beq\label{eq:sigma}
\si^2: = 16\pi^3\int_{-\infty}^\infty f^2(x) g^2(x)dx.
\eeq
% and $B(t)$ is a standard Brownian motion.
\end{Thm}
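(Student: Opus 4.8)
\noindent\emph{Proof strategy.} The plan is to realize each normalized functional $\widetilde Q_T(t)$ as an element of the second Wiener chaos of $X$ and then to deduce the joint (finite-dimensional) convergence from two ingredients: convergence of the one-dimensional marginals to Gaussian laws, and convergence of the full covariance structure to that of $\sigma B$. Concretely, I would first record the spectral representation $X(u)=\int_{\mathbb R}e^{iux}\,dZ(x)$, where $Z$ is the Gaussian spectral measure with $\E\,dZ(x)\overline{dZ(y)}=\delta(x-y)f(x)\,dx\,dy$, and expand $\widehat g(u-v)=\int_{\mathbb R}e^{i(u-v)\lambda}g(\lambda)\,d\lambda$. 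Integrating out the time variables over $[0,Tt]$ produces Dirichlet kernels $D_{A}(x)=\int_0^{A}e^{iux}\,du$, and subtracting the mean removes the diagonal term, so that $\widetilde Q_T(t)=\int\!\!\int'' h_{T,t}(x,y)\,dZ(x)\,dZ(y)$ is an off-diagonal double Wiener--It\^o integral with an explicit kernel $h_{T,t}$ built from $g$ and the kernels $D_{Tt}$. Thus all the $\widetilde Q_T(t_j)$ lie in the fixed second chaos, which is exactly the setting in which marginal Gaussian convergence together with covariance convergence forces joint Gaussian convergence (Peccati--Tudor).

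The marginals require almost no new work. Since $Q_T(t)$ is the quadratic functional integrated over the square $[0,Tt]^2$, it coincides with the horizon-$Tt$ functional, whence $\widetilde Q_T(t)=\sqrt t\,\widetilde Q_{Tt}(1)$. As $T\to\infty$ the horizon $Tt\to\infty$, so the univariate statement (the extension of Theorem~1 of \cite{GS2} made available by \eqref{eq:CLT condition 1}--\eqref{eq:CLT condition 2}) yields $\widetilde Q_{Tt}(1)\ConvD \mathcal N(0,\sigma^2)$ and hence $\widetilde Q_T(t)\ConvD \mathcal N(0,\sigma^2 t)$ for each fixed $t\in(0,1]$.

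The core of the argument is the covariance asymptotics
\[
\Cov\big(\widetilde Q_T(s),\widetilde Q_T(t)\big)\longrightarrow \sigma^2\min(s,t),\qquad s,t\in[0,1].
\]
By the Gaussian product (Isserlis) formula and the evenness of $\widehat g$, this covariance equals
\[
\frac{2}{T}\int_{[0,Ts]^2}\int_{[0,Tt]^2}\widehat g(u_1-v_1)\,\widehat g(u_2-v_2)\,r(u_1-u_2)\,r(v_1-v_2)\,du_1\,dv_1\,du_2\,dv_2,
\]
which, after passing to the spectral domain, becomes an integral of $g(\lambda_1)g(\lambda_2)f(x)f(y)$ against a product of four Dirichlet kernels of horizons $Ts$ and $Tt$. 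Assuming $s\le t$, I would split the domain $[0,Tt]^2$ so as to isolate the diagonal block $[0,Ts]^2$, whose contribution reproduces $\sigma^2 s$ through the variance hypothesis \eqref{eq:CLT condition 2}, from the interaction blocks involving the strip $[Ts,Tt]$. The Fej\'er-type kernels $\tfrac{1}{2\pi A}|D_A|^2$ act as approximate identities, and the mismatch of the centres of the Dirichlet kernels on the disjoint time blocks forces the interaction integrals to be $o(T)$; this is precisely where the integrability and square-integrability in \eqref{eq:CLT condition 1} are used. Establishing the negligibility of these cross (interaction) terms is the main obstacle, and it is presumably the substance of the preliminary lemmas of Section~\ref{sec:prelim}. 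In the Brownian interpretation, their vanishing is exactly the asymptotic independence of the increments.

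With the covariances identified, I would conclude as follows. Since each coordinate $\widetilde Q_T(t_j)$ converges to a Gaussian law and the covariance matrix $\Cov(\widetilde Q_T(t_i),\widetilde Q_T(t_j))$ converges to $\sigma^2\big(\min(t_i,t_j)\big)_{i,j}$, the Peccati--Tudor theorem for vectors of multiple Wiener--It\^o integrals gives joint convergence of $(\widetilde Q_T(t_1),\dots,\widetilde Q_T(t_k))$ to the centered Gaussian vector with this covariance, which is exactly $(\sigma B(t_1),\dots,\sigma B(t_k))$. Equivalently, one may argue by the Cram\'er--Wold device together with the method of cumulants, checking that the variance of any fixed linear combination converges (by the covariance step) while all its cumulants of order $\ge 3$, expressible as traces of products of the associated Toeplitz-type operators, tend to zero under \eqref{eq:CLT condition 1}. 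Either route establishes $\widetilde Q_T\ConvFDD \sigma B$.
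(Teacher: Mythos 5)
Your overall skeleton---realize every $\widetilde Q_T(t_j)$ in the second Wiener chaos, obtain univariate Gaussian marginals (your scaling identity $\widetilde Q_T(t)=\sqrt{t}\,\widetilde Q_{Tt}(1)$ combined with Theorem 1 of \cite{GS2} is exactly right), then upgrade to joint convergence via covariance convergence and Peccati--Tudor---matches the paper's architecture, which invokes the same Proposition 2 of \cite{peccati:2005:gaussian} (see Remark \ref{Rem:quadratic chaos}). But the step you yourself call ``the core of the argument'' is precisely the one you do not prove: you assert that the interaction blocks between $[0,Ts]^2$ and the strips involving $[Ts,Tt]$ contribute $o(T)$ because Fej\'er kernels with mismatched centres act as approximate identities, and you guess that this estimate is ``the substance of the preliminary lemmas of Section \ref{sec:prelim}.'' It is not, and for a reason: under the hypotheses (\ref{eq:CLT condition 1})--(\ref{eq:CLT condition 2}) alone one has integrability conditions only on the \emph{product} $f\cdot g$, not on $f$ and $g$ separately, and direct Dirichlet/Fej\'er-kernel estimates of the resulting four-fold spectral integral are not available at this level of generality. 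The same difficulty blocks your fallback route via cumulants: controlling the traces of products of Toeplitz-type operators for cumulants of order $\ge 3$ is the hard trace problem (cf.\ \cite{GST}) and again requires conditions on $f$ and $g$ individually; this is exactly why (\ref{eq:CLT condition 2}) is \emph{assumed} rather than derived. So as written your argument has a genuine gap at its central point.

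The paper circumvents the cross terms instead of estimating them. Using the moving-average representation (\ref{eq:moving average rep}), it constructs from the \emph{same} Brownian motion an auxiliary stationary process $Y$ with $b=(2\pi)^{1/2}a\,g^{1/2}$, hence spectral density $f_Y=2\pi f g\in L^1\cap L^2$ by (\ref{eq:CLT condition 1}), and approximates $\widetilde Q_T(t)$ in $L^2$ by the purely diagonal functional $L_T(t)$ of (\ref{eq:L_T(t)}) (Lemma \ref{Lem:approx}, i.e.\ Lemma 10 of \cite{GS2}; this is where assumption (\ref{eq:CLT condition 2}) is consumed). For $L_T$ the increment is exactly $T^{-1/2}\int_{Ts}^{Tt}\bigl(Y^2(u)-\E Y^2(u)\bigr)du$---no off-diagonal rectangles at all---so stationarity gives the closed form
\begin{equation*}
\E\bigl[(L_T(t)-L_T(s))^2\bigr]=2(t-s)\int_{-T(t-s)}^{T(t-s)}\Bigl(1-\frac{|u|}{T(t-s)}\Bigr)r_Y^2(u)\,du\ \longrightarrow\ \sigma^2(t-s),
\end{equation*}
by dominated convergence and Plancherel, since $r_Y\in L^2(\mathbb{R})$. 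The covariance structure of $\widetilde Q_T$ then converges automatically by Cauchy--Schwarz and Lemma \ref{Lem:approx}, and the Cram\'er--Wold device transfers the finite-dimensional convergence from $L_T$ to $\widetilde Q_T$. To repair your proof, replace the unproven $o(T)$ interaction estimate (and the cumulant fallback) by this approximation step; the rest of your outline then goes through essentially as the paper's proof does.
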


To extend the convergence of finite-dimensional distributions in Theorem \ref{Thm:CLT}
to the weak convergence in the space $C[0,1]$,
%in the case where the spectral density $f$ and the generating function $g$ are regularly varying at the origin, satisfying the conditions of Theorem B,
we impose an additional condition on the underlying Gaussian process $X(t)$
and on the generating function $g$.
It is convenient to impose this condition in the time domain, that is,
on the covariance function $r:=\hat{f}$ and the generating kernel $a:=\hat{g}$.
The following condition is an analog of the assumption in Theorem 2.3 of  \cite{giraitis:taqqu:2001:functional}:
\begin{equation}\label{eq:CLT tightness cond}
r(\cdot) \in L^p(\mathbb{R}), \quad a(\cdot) \in L^q(\mathbb{R})
\quad\text{for some} \quad p,q\ge 1, ~ \frac{1}{p}+\frac{1}{q}\ge \frac{3}{2}.
\end{equation}
\begin{Rem}
In fact under (\ref{eq:CLT condition 1}), the condition (\ref{eq:CLT tightness cond})  is  sufficient for the convergence in (\ref{eq:CLT condition 2}). Indeed, let $\bar{p}=p/(p-1)$ be the  H\"{o}lder conjugate of $p$ and let $\bar{q}=q/(q-1)$ be the H\"{o}lder conjugate of $q$. Since $1\le p,q\le 2$, one has by the Hausdorff-Young inequality  and (\ref{eq:CLT tightness cond}) that $
\|f\|_{\bar{p}}\le c_p\|r\|_p,~ \|g\|_{\bar{q}}\le c_q \|a\|_q, $
and hence
\[
f(\cdot) \in L^{\bar{p}},\quad g(\cdot) \in L^{\bar{q}}, \quad \frac{1}{\bar{p}}+\frac{1}{\bar{q}}=2-\frac{1}{p}-\frac{1}{q}\le 1/2.
\]
Then the convergence in (\ref{eq:CLT condition 2}) follows from  the proof of
Theorem 3 from \cite{GS2}. Note that a similar assertion in the discrete time case
was established in \cite{GSu}.
\end{Rem}
\begin{Rem}
{\rm Observe that condition (\ref{eq:CLT tightness cond}) is fulfilled if
the functions $r(t)$ and $a(t)$ satisfy the following:
there exist constants $C>0$, $\alpha^*$ and $\beta^*$,  such that
\begin{equation}\label{eq:CLT tightness cond power}
|r(t)|\le C  (1\wedge|t|^{\alpha^*-1}), \qquad |a(t)|\le C (1\wedge |t|^{\beta^*-1}),
\end{equation}
where $0<\alpha^*,\beta^*<1/2$ and $\alpha^*+\beta^*< 1/2$.
Indeed, to see this, note first that $r(\cdot), \ a(\cdot)\in L^\infty(\mathbb{R})$. Then one can choose $p,q\ge 1$ such that $p(\alpha^*-1)<-1$ and $q(\beta^*-1)<-1$, which entails that $r(\cdot)\in L^{p}(\mathbb{R})$ and $a(\cdot)\in L^{q}(\mathbb{R})$. Since $1/p+1/q<2-\alpha^*-\beta^*$ and  $2-\alpha^*-\beta^*>3/2$, one can further choose $p,q$ to satisfy $1/p+1/q \ge 3/2$.}
\end{Rem}

The next results, two functional central limit theorems, extend Theorems 1 and 5 of \cite{GS2}
to weak convergence in the space $C[0,1]$ of the stochastic
process $\widetilde Q_T(t)$ to a  standard Brownian motion.

\begin{Thm}\label{Thm:CLT C[0,1]}
Let the spectral density $f(x)$ and the generating function $g(x)$ satisfy
condition (\ref{eq:CLT condition 1}). Let the covariance function $r(t)$ and
the generating kernel $a(t)$ satisfy condition (\ref{eq:CLT tightness cond}). Then we have the following weak convergence in $C[0,1]$:
\[
 \widetilde Q_T(t) \Rightarrow \sigma B(t),
\]
where $\widetilde Q_T(t)$ is as in (\ref{eq:tilde Q_T(t)}),
$\sigma$ is as in (\ref{eq:sigma}), and $B(t)$ is a standard Brownian motion.
%where $\widetilde Q_T(t)$ is as in (\ref{eq:tilde Q_T(t)}),
%\beq\label{eq:sigma}
%\si^2: = 16\pi^3\int_{-\infty}^\infty f^2(x) g^2(x)dx,
%\eeq
% and $B(t)$ is a standard Brownian motion.
\end{Thm}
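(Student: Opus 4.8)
The plan is to prove weak convergence in $C[0,1]$ by the two standard ingredients: convergence of the finite-dimensional distributions together with tightness. The finite-dimensional convergence $\widetilde Q_T(t)\ConvFDD \sigma B(t)$ is already available from Theorem \ref{Thm:CLT}, provided its two hypotheses hold. Condition (\ref{eq:CLT condition 1}) is assumed here, and by the first Remark the conditions (\ref{eq:CLT condition 1}) and (\ref{eq:CLT tightness cond}) together force (\ref{eq:CLT condition 2}); hence Theorem \ref{Thm:CLT} applies and yields the finite-dimensional convergence. It therefore remains to establish tightness of the family $\{\widetilde Q_T\}_T$ in $C[0,1]$.

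For tightness I would invoke the moment criterion for $C[0,1]$ (Billingsley): since the one-dimensional marginals are already tight by the finite-dimensional convergence, it suffices to produce constants $C>0$ and $\delta>0$, independent of $T$, such that
\[
\E\bigl[\bigl(\widetilde Q_T(t)-\widetilde Q_T(s)\bigr)^4\bigr]\le C\,|t-s|^{1+\delta},\qquad s,t\in[0,1].
\]
The key simplification is that, for each fixed $T$, the centered functional $Q_T(t)-\E[Q_T(t)]$ lives in the second Wiener--It\^o chaos generated by the spectral representation of $X$, and hence so does every increment $\widetilde Q_T(t)-\widetilde Q_T(s)$. By hypercontractivity (equivalence of all $L^p$-norms on a fixed chaos) one has $\E[Z^4]\le C\,(\E[Z^2])^2$ for any second-chaos variable $Z$, so it is enough to prove the second-moment bound
\[
\E\bigl[\bigl(\widetilde Q_T(t)-\widetilde Q_T(s)\bigr)^2\bigr]=T^{-1}\,\Var\bigl(Q_T(t)-Q_T(s)\bigr)\le C\,|t-s|,
\]
which then gives the fourth-moment estimate with $\delta=1$.

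To obtain the second-moment bound I would write the increment $Q_T(t)-Q_T(s)$ as the Toeplitz quadratic functional over the L-shaped region $L_{s,t}:=[0,Tt]^2\setminus[0,Ts]^2$ and apply the standard variance formula for Gaussian quadratic forms,
\[
\Var\bigl(Q_T(t)-Q_T(s)\bigr)=2\int_{L_{s,t}}\!\int_{L_{s,t}} \widehat g(u_1-v_1)\,\widehat g(u_2-v_2)\,r(u_1-u_2)\,r(v_1-v_2)\,du_1\,dv_1\,du_2\,dv_2 .
\]
I would then decompose $L_{s,t}$ into the two boundary strips and the corner square $[Ts,Tt]^2$, and estimate the resulting integrals by a generalized H\"{o}lder/Young inequality, exploiting precisely the assumptions $r\in L^p(\mathbb{R})$ and $a=\widehat g\in L^q(\mathbb{R})$ with $\frac1p+\frac1q\ge\frac32$. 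A power-counting argument of the type used for traces of products of truncated Toeplitz operators (paralleling Theorem 2.3 of \cite{giraitis:taqqu:2001:functional}) should show that each piece contributes at most $C\,T\,|t-s|$, giving the required linear-in-$|t-s|$ bound after division by $T$; this is where the preliminary lemmas of Section \ref{sec:prelim} enter.

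The hard part will be this last estimate, namely making the bound simultaneously \emph{linear} in $|t-s|$ and \emph{uniform} in $T$. Two points need care. First, the integrand is supported on the non-rectangular region $L_{s,t}$, so the H\"{o}lder estimate must be organized so that the strip length $T|t-s|$, rather than the full length $Tt$, controls the bound; the condition $\frac1p+\frac1q\ge\frac32$ is exactly what balances the exponents in this counting. Second, for very short increments of order $|t-s|\lesssim 1/T$ the heuristic Brownian scaling may degenerate, and the clean bound $C|t-s|$ need not come from the same estimate; I would treat this regime separately by a direct estimate of the increment over the thin region (or by interpolating $\widetilde Q_T$ linearly on a mesh of width $1/T$), and then combine the two regimes to conclude tightness via the moment criterion.
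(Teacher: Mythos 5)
Your proposal is correct and follows essentially the same route as the paper: finite-dimensional convergence from Theorem \ref{Thm:CLT} (with the first Remark supplying condition (\ref{eq:CLT condition 2}) from (\ref{eq:CLT condition 1}) and (\ref{eq:CLT tightness cond})), then tightness via the second-moment bound $\E\bigl[(\widetilde Q_T(t)-\widetilde Q_T(s))^2\bigr]\le C(t-s)$ obtained by splitting the L-shaped region into the corner square and the strips, estimating with the generalized H\"older inequality (Lemma \ref{Lem:L^p sum}) together with $L^p$-interpolation (using $r\in L^\infty$) to reach the exact balance $1/p'+1/q=3/2$, and passing to fourth moments by hypercontractivity --- precisely the content of Lemma \ref{Lem:square increment CLT} and the paper's proof of Theorem \ref{Thm:CLT C[0,1]}. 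The one caveat you raise, a possibly degenerate regime $|t-s|\lesssim 1/T$, does not arise in continuous time: after the change of variables in Lemma \ref{Lem:square increment CLT} one free variable ranges over the strip $[Ts,Tt]$, contributing the factor $T(t-s)$ exactly, and division by $T$ gives the bound $C(t-s)$ uniformly in $T>0$ and in $0\le s\le t\le 1$, so no separate treatment of short increments or mesh interpolation is needed.
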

\vskip2mm
Recall that a function $u(x)$, $x\in\mathbb{R}$, is called slowly varying at $0$ if it
is non-negative and for any $t>0$
\[
\lim_{x\rightarrow 0} \frac{u(xt)}{u(x)}\rightarrow 1.
\]
Let $SV_0(\mathbb{R})$ be the class of slowly varying at zero functions
$u(x)$, $x\in\mathbb{R}$, satisfying the following conditions:
for some $a>0$, $u(x)$ is bounded on $[-a,a]$, $\lim_{x\to0}u(x)=0,$ \
$u(x)=u(-x)$ and $0<u(x)<u(y)$\ for\ $0<x<y<a$.
 An example of a function belonging to $SV_0(\mathbb{R})$
 is $u(x)= \left|\ln |x|\right|^{-\gamma}$ with $\gamma>0$ and $a=1$.

\begin{Thm}\label{Thm:CLT C[0,1] B}
Assume that the functions $f$ and $g$ are integrable on $\mathbb{R}$
and bounded outside any neighborhood of the origin, and satisfy for some $a>0$
\beq
\label{c-11}
f(x)\le |x|^{-\alpha}L_1(x), \q
|g(x)|\le |x|^{-\beta}L_2(x),\q x\in [-a,a]%\mathbb{R},
\eeq
for some $\alpha<1, \ \beta<1$ with $\alpha+\beta\le1/2$,
where $L_1(x)$ and $L_2(x)$ are slowly varying at zero functions
satisfying
\begin{equation}\label{eq:L1 L2 cond}
L_i\in SV_0(\mathbb{R}),
\quad x^{-(\alpha+\beta)}L_i(x)\in L^2[-a,a], \quad i=1,2.
\end{equation}
 Let, in addition, the covariance function $r(t)$ and
the generating kernel $a(t)$ satisfy condition (\ref{eq:CLT tightness cond}). Then we have the following weak convergence in $C[0,1]$:
\[
 \widetilde Q_T(t) \Rightarrow \sigma B(t),
\]
where $\widetilde Q_T(t)$ is as in (\ref{eq:tilde Q_T(t)}),
$\sigma$ is as  in (\ref{eq:sigma}), and $B(t)$ is a standard Brownian motion.
\end{Thm}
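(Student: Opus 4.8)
The plan is to deduce Theorem \ref{Thm:CLT C[0,1] B} from Theorem \ref{Thm:CLT C[0,1]} by showing that the spectral hypotheses (\ref{c-11})--(\ref{eq:L1 L2 cond}) imply condition (\ref{eq:CLT condition 1}), namely $fg\in L^1(\mathbb{R})\cap L^2(\mathbb{R})$. Since the tightness condition (\ref{eq:CLT tightness cond}) is assumed in both theorems, once (\ref{eq:CLT condition 1}) is verified the conclusion follows verbatim from Theorem \ref{Thm:CLT C[0,1]}, and no new tightness estimate is required. In other words, I would argue that the power-type long-memory assumptions are a checkable special case of the hypotheses of Theorem \ref{Thm:CLT C[0,1]}.

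To verify $fg\in L^1$, I would split $\mathbb{R}=[-a,a]\cup([-a,a])^c$. On $[-a,a]$ the bounds (\ref{c-11}) give $|fg|\le |x|^{-(\alpha+\beta)}L_1(x)L_2(x)$, and since $L_1,L_2$ are bounded on $[-a,a]$ (they lie in $SV_0(\mathbb{R})$) while $\alpha+\beta\le 1/2<1$, the singularity $|x|^{-(\alpha+\beta)}$ is integrable near the origin. On $([-a,a])^c$ we use that $f$ is bounded there and $g\in L^1(\mathbb{R})$, so $\int_{|x|>a}|fg|\le \sup_{|x|>a}|f|\cdot\|g\|_{L^1}<\infty$.

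The only genuinely delicate estimate is $fg\in L^2$ near the origin, because in the boundary case $\alpha+\beta=1/2$ the pure power bound gives only $(fg)^2\le |x|^{-1}L_1^2L_2^2$, and $|x|^{-1}$ is not integrable. This is precisely where the $L^2$-part of (\ref{eq:L1 L2 cond}) enters: bounding $L_2$ by its supremum $M_2$ on $[-a,a]$ yields $(fg)^2\le M_2^2\,|x|^{-2(\alpha+\beta)}L_1^2(x)$, whose integral over $[-a,a]$ equals $M_2^2\|x^{-(\alpha+\beta)}L_1\|_{L^2[-a,a]}^2<\infty$ by assumption. On $([-a,a])^c$, both $f$ and $g$ are bounded by hypothesis, so there $(fg)^2\le C\,|g|$ for a constant $C$ depending on these sup-bounds, and this is integrable because $g\in L^1(\mathbb{R})$. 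Hence $fg\in L^2(\mathbb{R})$ as well, and I expect this boundary $L^2$-bound near zero to be the main (and essentially only) obstacle, everything else being the soft splitting argument above.

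Having established (\ref{eq:CLT condition 1}), Theorem \ref{Thm:CLT C[0,1]} applies directly and yields $\widetilde Q_T(t)\Rightarrow \sigma B(t)$ in $C[0,1]$. As an alternative that avoids invoking Theorem \ref{Thm:CLT C[0,1]} as a black box, one could argue directly: by the first Remark, (\ref{eq:CLT condition 1}) together with (\ref{eq:CLT tightness cond}) forces the variance convergence (\ref{eq:CLT condition 2}), so Theorem \ref{Thm:CLT} gives convergence of the finite-dimensional distributions, while tightness in $C[0,1]$ follows from the Giraitis--Taqqu type moment bound associated with (\ref{eq:CLT tightness cond}); combining the two gives the asserted weak convergence.
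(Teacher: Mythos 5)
Your proposal is correct, and it takes a genuinely cleaner route than the paper's own proof, which is essentially a citation: the paper asserts that the hypotheses of Theorem~\ref{Thm:CLT C[0,1] B} imply both (\ref{eq:CLT condition 1}) and the variance convergence (\ref{eq:CLT condition 2}) by referring to the proof of Theorem~5 of \cite{GS2} (a spectral-domain trace computation that uses the full $SV_0$ structure of $L_1,L_2$, including monotonicity and $L_i(x)\to 0$, precisely to handle the critical case $\alpha+\beta=1/2$), and then repeats the tightness argument of Theorem~\ref{Thm:CLT C[0,1]} via Lemma~\ref{Lem:square increment CLT} and hypercontractivity. You instead verify (\ref{eq:CLT condition 1}) by hand and then treat the theorem as a literal corollary of Theorem~\ref{Thm:CLT C[0,1]}, letting the first remark in Section~\ref{sec:main result} (Hausdorff--Young plus the proof of Theorem~3 of \cite{GS2}) supply (\ref{eq:CLT condition 2}) from the time-domain condition (\ref{eq:CLT tightness cond}), which is assumed in both theorems; this is logically sound and involves no circularity, since the paper's proof of Theorem~\ref{Thm:CLT C[0,1]} already relies on that remark in the same way. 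Your verification of $f\cdot g\in L^1(\mathbb{R})\cap L^2(\mathbb{R})$ is correct, and your identification of the critical-case $L^2$ bound near the origin as the only delicate point is apt --- indeed the paper's remark following Theorem~\ref{Thm:CLT C[0,1] B} records exactly the implication $f\cdot g\in L^2(\mathbb{R})$ from (\ref{eq:L1 L2 cond}) and the boundedness assumptions. What your route buys: it never uses the monotonicity or vanishing-at-zero parts of the $SV_0$ assumption, only boundedness of $L_i$ near the origin together with the $L^2$ condition in (\ref{eq:L1 L2 cond}), thereby exposing that, \emph{given} (\ref{eq:CLT tightness cond}), Theorem~\ref{Thm:CLT C[0,1] B} is a special case of Theorem~\ref{Thm:CLT C[0,1]}; what the paper's route buys is robustness, since verifying (\ref{eq:CLT condition 2}) spectrally via \cite{GS2} is the argument that would survive if the time-domain hypothesis were weakened (the original Theorem~5 of \cite{GS2} has no analogue of (\ref{eq:CLT tightness cond})), and your own alternative closing paragraph (Theorem~\ref{Thm:CLT} for finite-dimensional distributions plus the moment bound for tightness) is in fact the paper's proof modulo that citation. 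One pedantic repair: the $a$ in the definition of $SV_0(\mathbb{R})$ need not coincide with the $a$ of (\ref{c-11}); since $f$ and $g$ are bounded outside any neighborhood of the origin, you may shrink $a$ until the two agree, after which your splitting argument goes through verbatim.
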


\begin{Rem}\label{Rem:B follow A}
{\rm   The conditions $\al<1$ and $\be<1$ ensure that the Fourier transforms
of $f$ and $g$ are well defined. Observe that when $\al>0$ the process $\{X(t), t\in\mathbb{Z}\}$ may exhibit long-range dependence. We also allow here  $\alpha+\beta$ to assume the critical value 1/2.
}
\end{Rem}
\begin{Rem}
The assumptions $f\cdot g\in L^1(\mathbb{R})$, $f,g\in L^\infty(\mathbb{R}\setminus [-a,a])$ and (\ref{eq:L1 L2 cond}) imply that $f\cdot g \in L^2(\mathbb{R})$, so that $\sigma^2$ in (\ref{eq:sigma}) is finite.
\end{Rem}

\begin{Rem}
One may wonder, why, in Theorem \ref{Thm:CLT C[0,1] B}, we suppose that $L_1(x)$ and $L_2(x)$  belong to $SV_0(\mathbb{R})$ instead of merely being slowly varying at zero. This is done in order to deal with the critical case $\alpha+\beta=1/2$.
Suppose that we are away from this critical case, namely, $f(x)=|x|^{-\alpha}l_1(x)$ and $g(x)=|x|^{-\beta}l_2(x)$, where $\alpha+\beta< 1/2$, and $l_1(x)$ and $l_2(x)$ are slowly varying at zero functions. Assume also that $f(x)$ and $g(x)$ are integrable and bounded on $(-\infty,-a)\cup(a,+\infty)$ for any $a>0$. We claim that Theorem \ref{Thm:CLT C[0,1] B} applies. Indeed, choose $\alpha'>\alpha$, $\beta'>\beta$ with $\alpha'+\beta'<1/2$.
Write
$f(x)=|x|^{-\alpha'} |x|^{\delta}l_1(x)$, where $\delta=\alpha'-\alpha>0$. Since $l_1(x)$ is slowly varying, when $|x|$ is small enough,  for some $\epsilon\in (0,\delta)$ we have $
|x|^{\delta}l_1(x)\le |x|^{\delta-\epsilon}$. Then  one can  bound $|x|^{\delta-\epsilon}$ by $c\left|\ln |x|\right|^{-1}\in SV_0(\mathbb{R})$ for small $|x|<1$. Hence one  has  when  $|x|<1$ is small enough,
$
f(x)\le |x|^{-\alpha'} \left(c  \left|\ln |x|\right|^{-1} \right).
$
Similarly, when $|x|<1$ is small enough, one has
$
g(x)\le |x|^{-\beta'} \left(c  \left|\ln |x|\right|^{-1} \right).
$
All the assumptions in Theorem \ref{Thm:CLT C[0,1] B} are now readily
checked with $\alpha$, $\beta$ replaced by $\alpha'$ and $\beta'$, respectively.

\end{Rem}

Now we state a \emph{non-central limit theorem}  in the continuous time case.
Let the spectral density $f$ and the generating function $g$ satisfy
\beq
\label{nc-2}
f(x)= |x|^{-\al}L_1(x) \q {\rm and} \q
g(x)= |x|^{-\be}L_2(x), \q x\in \mathbb{R},\q \al<1, \, \be<1,
\eeq
 with   slowly varying at zero  functions $L_1(x)$ and $L_2(x)$
 such that $\int_{\mathbb{R}}|x|^{-\alpha}L_1(x)dx<\infty$
and $\int_{\mathbb{R}}|x|^{-\beta}L_2(x)dx<\infty$.
We assume in addition that the functions  $L_1(x)$ and $L_2(x)$ satisfy
the following condition, called Potter's bound (see \cite{giraitis:koul:surgailis:2009:large}, formula (2.3.5)):
for any $\epsilon>0$ there exists a constant $C>0$ so that if $T$ is large enough, then
\begin{equation}\label{eq:potter}
\frac{L_i(u/T)}{L_i(1/T)}\le C (|u|^{\epsilon}+|u|^{-\epsilon}), \quad i=1,2.
\end{equation}
Note that a  sufficient condition for (\ref{eq:potter}) to hold is that
$L_1(x)$ and $L_2(x)$ are bounded on intervals $[a,\infty)$ for any $a>0$,
which is the case for the slowly varying functions in Theorem \ref{Thm:CLT C[0,1] B}.

Now we are interested in the limit process of the following normalized version
of the process $Q_T(t)$ given by (\ref{eq:quad proc}), with $f$ and $g$ as in (\ref{nc-2}):%. Let
\begin{equation}\label{eq:Z_T}
Z_T(t):= \frac{1}{T^{\alpha+\beta} L_1(1/T)L_2(1/T)}\left( Q_T(t) -\E [Q_T(t)] \right).
\end{equation}
%where  is as in (\ref{eq:quad proc}), with $f$ and $g$ as in
%(\ref{eq:spec den}).% and (\ref{eq:gen fun}).

\begin{Thm}\label{Thm:NCLT}
Let $f$ and $g$ be as in (\ref{nc-2}) with $\alpha<1$, $\beta<1$
and slowly varying at zero functions $L_1(x)$ and $L_2(x)$ satisfying (\ref{eq:potter}), and let $Z_T(t)$ be as in (\ref{eq:Z_T}).
Then for $\alpha+\beta>1/2$, we have the following weak convergence
in the space $C[0,1]$:
\begin{equation*}
Z_T(t)\Rightarrow Z(t),
\end{equation*}
where the limit process $Z(t)$ is given by
\begin{equation}\label{eq:limit proc}
Z(t)=\int_{\mathbb{R}^2}'' H_t(x_1,x_2) W(dx_1) W(dx_2),
\end{equation}
with
\begin{equation}\label{eq:H_t}
H_t(x_1,x_2)=|x_1x_2|^{-\alpha/2}\int_{\mathbb{R}} \left[\frac{e^{it(x_1+u)}-1}{i(x_1+u)}\right]\cdot\left[\frac{e^{it(x_2-u)}-1}{i(x_2-u)}\right] |u|^{-\beta}du~,
\end{equation}
where $W(\cdot)$ is a complex Gaussian random measure with Lebesgue control measure, and the double prime in the integral (\ref{eq:limit proc}) indicates that the
integration excludes the diagonals $x_1=\pm x_2$.
\end{Thm}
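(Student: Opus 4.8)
The plan is to represent $Z_T(t)$ as a double Wiener--It\^o integral, to deduce convergence of its finite-dimensional distributions from $L^2$-convergence of the integral kernels, and finally to establish tightness in $C[0,1]$ through a uniform second-moment estimate on the increments. First I would insert the spectral representation $X(u)=\int_{\mathbb{R}}e^{iux}\sqrt{f(x)}\,W(dx)$ into (\ref{eq:quad proc}) and use (\ref{eq:g hat}) to write $\widehat g(u-v)=\int_{\mathbb{R}}e^{i(u-v)\lambda}g(\lambda)\,d\lambda$. Integrating over $u,v\in[0,Tt]$ produces, for each spectral variable, a factor $e_\tau(\xi):=(e^{i\tau\xi}-1)/(i\xi)$; a stochastic Fubini argument (valid under $f,g\in L^1(\mathbb{R})$) together with the removal of the diagonal by centering yields
\begin{equation*}
Q_T(t)-\E[Q_T(t)]=\int_{\mathbb{R}^2}''\Big(\int_{\mathbb{R}}g(\lambda)\,e_{Tt}(x_1+\lambda)\,e_{Tt}(x_2-\lambda)\,d\lambda\Big)\sqrt{f(x_1)f(x_2)}\,W(dx_1)W(dx_2).
\end{equation*}
Then I would rescale $x_i=y_i/T$, $\lambda=u/T$, exploiting the self-similarity $W(dy/T)\overset{d}{=}T^{-1/2}W(dy)$ of Gaussian white noise. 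Using $f(y/T)=T^{\alpha}|y|^{-\alpha}L_1(y/T)$ and $g(u/T)=T^{\beta}|u|^{-\beta}L_2(u/T)$, the powers of $T$ combine to exactly $T^{\alpha+\beta}L_1(1/T)L_2(1/T)$, so that $Z_T(t)=\int_{\mathbb{R}^2}''h_{T,t}(y_1,y_2)\,W(dy_1)W(dy_2)$ with
\begin{equation*}
h_{T,t}(y_1,y_2)=|y_1y_2|^{-\alpha/2}\frac{\sqrt{L_1(y_1/T)L_1(y_2/T)}}{L_1(1/T)}\int_{\mathbb{R}}|u|^{-\beta}\frac{L_2(u/T)}{L_2(1/T)}\,e_t(y_1+u)\,e_t(y_2-u)\,du.
\end{equation*}

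For the finite-dimensional distributions, note that by slow variation $h_{T,t}\to H_t$ pointwise. To upgrade this to convergence in $L^2(\mathbb{R}^2)$, I would use Potter's bound (\ref{eq:potter}) to dominate the ratios $L_1(y_i/T)/L_1(1/T)$ and $L_2(u/T)/L_2(1/T)$ by factors $|\cdot|^{\pm\epsilon}$ and then apply dominated convergence; the hypothesis $\alpha+\beta>1/2$, together with $\alpha,\beta<1$, is exactly what guarantees both that $H_t\in L^2(\mathbb{R}^2)$ and that an integrable dominating function exists. The same argument applied to finite linear combinations gives $\sum_j c_j h_{T,t_j}\to\sum_j c_j H_{t_j}$ in $L^2(\mathbb{R}^2)$. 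Since these objects all lie in the second Wiener chaos, the isometry for double Wiener--It\^o integrals turns $L^2(\mathbb{R}^2)$-convergence of kernels into $L^2(\Omega)$-convergence of the integrals, whence $Z_T\ConvFDD Z$.

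For tightness, the isometry gives $\E|Z_T(t)-Z_T(s)|^2=2\|h_{T,t}-h_{T,s}\|_{L^2(\mathbb{R}^2)}^2$. The increment kernel involves $e_t(\xi)e_t(\eta)-e_s(\xi)e_s(\eta)$, which I would split using $e_t(\xi)-e_s(\xi)=e^{is\xi}e_{t-s}(\xi)$ to expose the dependence on $t-s$. The scaling relation $\|H_t\|_{L^2}^2=t^{2(\alpha+\beta)}\|H_1\|_{L^2}^2$, obtained by the change of variables $x_i\mapsto x_i/t$, indicates that the target estimate is $\sup_T\E|Z_T(t)-Z_T(s)|^2\le C|t-s|^{2(\alpha+\beta)}$, the uniformity in $T$ again coming from Potter's bound. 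Because $2(\alpha+\beta)>1$ and $Z_T(0)=0$, the second moment already suffices in Billingsley's moment criterion (if higher moments were needed, hypercontractivity in the fixed chaos would provide them), and this yields tightness of $\{Z_T\}$ in $C[0,1]$; combined with the finite-dimensional convergence it gives $Z_T\Rightarrow Z$.

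The main obstacle is the uniform-in-$T$ increment estimate of the tightness step: one must bound $\|h_{T,t}-h_{T,s}\|_{L^2(\mathbb{R}^2)}^2$ by $C|t-s|^{2(\alpha+\beta)}$ while controlling the three slowly varying ratios simultaneously and uniformly in $T$, and while tracking the exact power of $|t-s|$ through the oscillatory factors $e_{t-s}$ and the $|u|^{-\beta}$-convolution. Establishing the $L^2$-finiteness of $H_1$ and the precise homogeneity degree of its inner integral is the analytically delicate point on which both the definition of the limit and the tightness bound rest.
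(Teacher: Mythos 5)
Your representation of $Z_T$ as a double Wiener--It\^o integral and your treatment of the finite-dimensional distributions essentially coincide with the paper's route (its Lemma \ref{Lem:rewrite}, followed by Potter's bound, dominated convergence, and the second-chaos isometry, with the $L^2$-finiteness of the kernels supplied by the power-counting Lemma \ref{Lem:power counting} that you correctly flag as the delicate analytic input). The gap is in the tightness step. Your target estimate $\sup_T \E|Z_T(t)-Z_T(s)|^2\le C|t-s|^{2(\alpha+\beta)}$ is not obtainable, and is false in general, because it is derived from the scaling identity $\|H_t\|_{L^2}^2=t^{2(\alpha+\beta)}\|H_1\|_{L^2}^2$, which would control increments only if $Z_T(t)-Z_T(s)$ were distributed like $Z_T(t-s)$. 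It is not: the increment of $Q_T$ is a double integral over the L-shaped region $[0,Tt]^2\setminus[0,Ts]^2$, and the cross rectangles $[0,Ts]\times[Ts,Tt]$ contribute terms whose kernels involve the mixed products $\Delta_s(x_1+u)\Delta_{t-s}(x_2-u)$ (up to phases), so $\|H_t-H_s\|_{L^2}\neq\|H_{t-s}\|_{L^2}$ and the increments are not stationary. When you bound these mixed products via $|\Delta_\tau(x)|\le c\,\tau^{\delta}f_\delta(x)$ (Lemma \ref{Lem:bound delta}), the four resulting $f_\delta$ factors must satisfy the power-counting constraint of Lemma \ref{Lem:power counting}, which forces $\delta<(\alpha+\beta)/2$; the cross terms then yield only $s^{\delta}(t-s)^{\delta}\le (t-s)^{\delta}$ per kernel, hence a second-moment exponent $2\delta<\alpha+\beta$. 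For the whole range $1/2<\alpha+\beta\le 1$ this exponent is $\le 1$, so the second moment alone cannot feed Billingsley's criterion precisely where the theorem is hardest; your parenthetical fallback to hypercontractivity does not repair this as stated, since it is predicated on the exponent $2(\alpha+\beta)$ that the cross terms destroy.

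The correct repair --- which is the paper's argument (Lemma \ref{Lem:square increment} and the proof of Theorem \ref{Thm:NCLT}) --- is to accept the weaker uniform bound $\E|Z_T(t)-Z_T(s)|^2\le C(t-s)^{2\delta}$ for an arbitrary fixed $\delta\in(0,(\alpha+\beta)/2)$, obtained from the three-term decomposition $\Delta_t\Delta_t-\Delta_s\Delta_s=\Delta_s\Delta_{t-s}+\Delta_{t-s}\Delta_s+\Delta_{t-s}\Delta_{t-s}$ (modulo phases), Potter's bound for uniformity in $T$, and Lemma \ref{Lem:power counting}; then, since $\alpha+\beta>1/2$ allows the choice $\delta\in(1/4,(\alpha+\beta)/2)$, hypercontractivity in the second chaos (Corollary 5.6 of \cite{major:1981:multiple}) upgrades this to $\E|Z_T(t)-Z_T(s)|^4\le C|t-s|^{4\delta}$ with $4\delta>1$, after which Kolmogorov's criterion and Lemma 5.1 of \cite{I} give both the existence of continuous versions (also needed for the limit $Z$, which your second-moment route would likewise fail to provide when $\alpha+\beta\le1$) and tightness in $C[0,1]$. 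In short: fourth moments are essential here, not optional, and the H\"older exponent comes from the increment decomposition together with power counting, not from self-similar scaling of $H_t$.
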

%\begin{equation}\label{eq:limit proc}
%Z(t)=\int_{\mathbb{R}^2}'' H_t(x_1,x_2) W(dx_1) W(dx_2),
%\end{equation}
%with
%\begin{equation}\label{eq:H_t}
%H_t(x_1,x_2)=|x_1x_2|^{-\alpha/2}\int_{\mathbb{R}} \left[\frac{e^{it(x_1+u)}-1}{i(x_1+u)}\right]\left[\frac{e^{it(x_2-u)}-1}{i(x_2-u)}\right] |u|^{-\beta}du~,
%\end{equation}
%and $W(\cdot)$ is a complex Gaussian random measure with Lebesgue control measure.

\begin{Rem}
Comparing Theorem \ref{Thm:NCLT} and Theorem 1 of \cite{terrin:taqqu:1990:noncentral},
we see that the limit process $Z(t)$ is the same both for continuous and discrete time models.
\end{Rem}
\begin{Rem}
Denoting by $P_T$ and $P$ the measures generated in $C[0,1]$  by the processes $Z_T(t)$
and $Z(t)$ given by (\ref{eq:Z_T}) and (\ref{eq:limit proc}), respectively,
Theorem \ref{Thm:NCLT} can be restated as follows: under the conditions of
Theorem \ref{Thm:NCLT}, the measure $P_T$ converges weakly in $C[0,1]$ to
the measure $P$ as $T\rightarrow\infty$.
A similar assertion can be stated for Theorems \ref{Thm:CLT C[0,1]} and
\ref{Thm:CLT C[0,1] B}.
\end{Rem}

It is worth noting that although the statement of our Theorem \ref{Thm:NCLT} is similar to that of Theorem 1 of  \cite{terrin:taqqu:1990:noncentral}, the proof is different
and simpler, and does not use the hard analysis of \cite{terrin:taqqu:1990:noncentral},
although some technical results of \cite{terrin:taqqu:1990:noncentral}  are stated in lemmas and used  in the proofs.
Our approach in the CLT case (Theorems \ref{Thm:CLT} - \ref{Thm:CLT C[0,1] B}), uses the method developed in \cite{GS2},
which itself is based on an approximation of the trace of the product of truncated
Toeplitz operators.
For the non-CLT case (Theorem \ref{Thm:NCLT}), we use the integral representation
of the underlying process and  properties of Wiener-It\^o integrals.

\section{Preliminaries}\label{sec:prelim}

In this section we state a number of lemmas which will be used in the proof of the theorems.
The following result extends Lemma 9 of \cite{GS2}.
\begin{Lem}\label{Lem:Y(t) CLT}
Let $Y(t)$ be a centered stationary Gaussian process with spectral density $f_Y(x)\in L^1(\mathbb{R})\cap L^2(\mathbb{R})$. Consider the normalized process:
\begin{equation}\label{eq:L_T(t)}
L_T(t):=\frac{1}{T^{1/2}}\left( \int_0^{Tt} Y^2(u) du
- \E \left[\int_0^{Tt} Y^2(u)du \right]\right).
\end{equation}
Then we have the following convergence of finite-dimensional distributions:
\begin{equation}\label{eq:L_T FDD B(t)}
L_T(t) \ConvFDD \sigma_Y B(t),\qquad \sigma_Y^2 =4\pi \int_{-\infty}^\infty f^2_Y(x) dx,
\end{equation}
where $B(t)$ is standard Brownian motion.
\end{Lem}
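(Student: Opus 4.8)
The plan is to use the classical method of cumulants together with the Cramér--Wold device, thereby reducing the finite-dimensional convergence to the convergence of the cumulants of an arbitrary linear combination $\sum_j \la_j L_T(t_j)$. The starting point is that, for a centered Gaussian process, $L_T(t)$ is a second-chaos (quadratic) functional whose cumulants admit a closed cyclic-product form expressible through traces of truncated Toeplitz operators, in the spirit of \cite{GS2}. Writing $r:=r_Y=\widehat{f_Y}$ for the covariance function and $R_L$ for the truncated Toeplitz operator on $L^2[0,L]$ with kernel $r(u-v)$ (here $L=Tt$), the standard formula for cumulants of quadratic forms of Gaussian vectors gives, for the centered functional,
\[
\kappa_m\big(L_T(t)\big)=T^{-m/2}\,2^{m-1}(m-1)!\,\tr\big(R_{Tt}^m\big),\qquad m\ge 2 ,
\]
with $\kappa_1\equiv 0$ after centering. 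The entire argument then rests on the behaviour of $\tr(R_L^m)$ as $L\to\infty$.

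First I would settle the second cumulant, which controls the limiting covariance. Since $\tr(R_L^2)=\int_{[0,L]^2} r^2(u-v)\,du\,dv$ and $\Cov(L_T(s),L_T(t))=\tfrac{2}{T}\int_0^{Ts}\!\int_0^{Tt} r^2(u-v)\,du\,dv$, a change of variables together with dominated convergence (legitimate because $r\in L^2$, which holds by Plancherel since $f_Y\in L^2$) yields $\Cov(L_T(s),L_T(t))\to 2\,(s\wedge t)\,\|r\|_2^2$. The Parseval identity $\|r\|_2^2=2\pi\|f_Y\|_2^2$ then produces the limit $\sigma_Y^2\,(s\wedge t)$ with $\sigma_Y^2=4\pi\|f_Y\|_2^2$, which is exactly the covariance of $\sigma_Y B(t)$; in particular $\Var(L_T(t))\to\sigma_Y^2 t$.

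The heart of the proof, and the main obstacle, is to show $\kappa_m(L_T(t))\to 0$ for every $m\ge 3$, for which it suffices to prove $\tr(R_L^m)=o(L^{m/2})$. Since $R_L$ is self-adjoint ($r$ is real and even) one has $\tr(R_L^m)\le \|R_L\|_{\mathrm{op}}^{\,m-2}\,\tr(R_L^2)$, and as $\tr(R_L^2)\le L\|r\|_2^2=O(L)$ it is enough to establish the refined operator-norm estimate $\|R_L\|_{\mathrm{op}}=o(L^{1/2})$. This is delicate because the Hilbert--Schmidt bound only gives $\|R_L\|_{\mathrm{op}}\le \tr(R_L^2)^{1/2}=O(L^{1/2})$, exactly the borderline, and it does not improve automatically since $f_Y$ need not be bounded (were $f_Y\in L^\infty$, the uniform bound $\|R_L\|_{\mathrm{op}}\le 2\pi\|f_Y\|_\infty$ would trivialize everything). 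I would obtain the little-$o$ improvement by truncating the symbol: set $f_Y^{(N)}=f_Y\,\mathbf{1}_{\{|f_Y|\le N\}}$ and split $R_L=R_L^{(N)}+S_L^{(N)}$ accordingly, so that $\|R_L^{(N)}\|_{\mathrm{op}}\le 2\pi N$ uniformly in $L$, while $\|S_L^{(N)}\|_{\mathrm{op}}\le \|S_L^{(N)}\|_{\mathrm{HS}}\le (2\pi L)^{1/2}\,\|f_Y-f_Y^{(N)}\|_2$. Dividing by $L^{1/2}$, letting first $L\to\infty$ and then $N\to\infty$ (so that $\|f_Y-f_Y^{(N)}\|_2\to0$), gives $\|R_L\|_{\mathrm{op}}=o(L^{1/2})$, whence $\kappa_m(L_T(t))\to0$ for all $m\ge3$.

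Finally, to upgrade marginal to finite-dimensional convergence I would invoke Cramér--Wold. A linear combination $\sum_j \la_j L_T(t_j)$ is again a quadratic functional, namely $T^{-1/2}\int_0^{T t_{\max}} a(u/T)\,(Y^2(u)-\E Y^2(u))\,du$ for a bounded step weight $a$, whose $m$-th cumulant equals $T^{-m/2}\,2^{m-1}(m-1)!\,\tr\big((M_a R_{T t_{\max}})^m\big)$, where $M_a$ denotes multiplication by $a$. The same operator-norm estimate applies, now through $\|M_a R_L\|_{\mathrm{op}}\le\|a\|_\infty\|R_L\|_{\mathrm{op}}=o(L^{1/2})$ together with $|\tr(B^m)|\le\|B\|_{\mathrm{op}}^{m-2}\|B\|_{\mathrm{HS}}^2$, so the cumulants of order $\ge3$ vanish in the limit, while the second cumulant converges to $\sigma_Y^2\sum_{i,j}\la_i\la_j\,(t_i\wedge t_j)=\Var\big(\sum_j\la_j\sigma_Y B(t_j)\big)$. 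Since a sequence whose cumulants converge to those of a (moment-determinate) Gaussian converges in distribution to that Gaussian, Cramér--Wold yields $L_T(t)\ConvFDD \sigma_Y B(t)$, completing the proof.
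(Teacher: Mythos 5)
Your proof is correct, but it takes a genuinely different and more self-contained route than the paper. The paper outsources the univariate convergence $L_T(t)\ConvD N(0,t\sigma_Y^2)$ to Lemma 9 of \cite{GS2}, and then, exploiting that $L_T(t)$ is a double Wiener--It\^o integral, upgrades univariate to joint Gaussian convergence via Proposition 2 of \cite{peccati:2005:gaussian}: for multiple Wiener--It\^o integrals, componentwise Gaussian limits plus convergence of the covariance structure imply the multivariate Gaussian limit. Consequently the paper's only actual computation is the increment variance $\E\left[(L_T(t)-L_T(s))^2\right]\to\sigma_Y^2(t-s)$, obtained from the Gaussian identity $\Cov(G_1^2,G_2^2)=2[\Cov(G_1,G_2)]^2$, dominated convergence and Parseval--Plancherel --- which is exactly your second-cumulant step. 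You instead reprove everything by the classical method of cumulants: your truncation-of-the-symbol argument giving $\|R_L\|_{\mathrm{op}}=o(L^{1/2})$, hence $\tr(R_L^m)=o(L^{m/2})$ for $m\ge 3$, amounts to a proof of the cited Lemma 9 (the same circle of ideas as in \cite{I} and \cite{GS2}), and your weighted functional $T^{-1/2}\int_0^{Tt_{\max}}a(u/T)\left(Y^2(u)-\E Y^2(u)\right)du$ with the bound $|\tr(B^m)|\le\|B\|_{\mathrm{op}}^{m-2}\|B\|_{\mathrm{HS}}^2$ replaces Peccati--Tudor in the Cram\'er--Wold step. What each buys: your argument is elementary and self-contained --- no chaos machinery, and the truncation trick cleanly handles the genuinely delicate point that $f_Y\in L^1\cap L^2$ need not be bounded, where the Hilbert--Schmidt bound alone sits exactly at the borderline $O(L^{1/2})$ --- while the paper's is far shorter because the hard univariate work is cited and the chaos structure dispenses with all cumulants of order $\ge 3$ and with any trace formula for weighted forms. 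The only loose ends in your write-up are routine: you should record that $R_L$ is nonnegative and trace class (Mercer/Karhunen--Lo\`eve), so that the continuous-time cumulant identity $\kappa_m=2^{m-1}(m-1)!\,\tr\left((M_aR_L)^m\right)$ is legitimate, and that convergence of all cumulants to those of a Gaussian law (which is moment-determinate) implies convergence in distribution --- both standard facts.
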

\begin{Rem}\label{Rem:quadratic chaos}
Observe that the normalized processes $\widetilde{Q}_T(t)$  and $L_T(t)$, given by
(\ref{eq:tilde Q_T(t)}) and (\ref{eq:L_T(t)}), can be expressed by double
Wiener-It\^o integrals (see, e.g., the proof of Lemma \ref{Lem:rewrite} below).
In our proofs we will use the following fact about weak convergence of multiple Wiener-It\^o integrals:
given the convergence of the covariance, the multivariate  convergence to a
Gaussian vector is implied by the univariate convergence of each component
(see \cite{peccati:2005:gaussian}, Proposition 2).
\end{Rem}
\begin{proof}[Proof of Lemma \ref{Lem:Y(t) CLT}]
For a fixed $t$, the univariate convergence in distribution
\[
L_T(t)\ConvD   N(0,t\sigma^2_Y)\quad {\rm as} \quad T\rightarrow\infty
\]
follows from Lemma 9 of \cite{GS2}. To show (\ref{eq:L_T FDD B(t)}), in view of
Remark \ref{Rem:quadratic chaos} and Proposition 2 of \cite{peccati:2005:gaussian}, it remains
to show that the covariance structure of $L_T(t)$ converges to that of $\sigma_Y B(t)$.
Specifically, it suffices to show that for any $0<s<t$,
\begin{equation}\label{eq:Y(t) cov B(t)}
\E \left[\left(L_T(t)-L_T(s)\right)^2\right] \rightarrow \sigma_Y^2 \cdot(t-s) \quad \text{as} \quad T\rightarrow\infty.
\end{equation}
Indeed, using the fact that for a Gaussian vector $(G_1,G_2)$ we have $\Cov(G_1^2,G_2^2)=2[\Cov(G_1,G_2)]^2$, and letting
$r_Y(u)=\int_{\mathbb{R}} e^{ixu} f_Y(x) dx$ be the covariance function
of  $Y(t)$, we can write
\begin{align*}
\E \left[\left(L_T(t)-L_T(s)\right)^2\right]
= 2(t-s)\int_{-T(t-s)}^{T(t-s)} \left(1 -\frac{|u|}{T(t-s)} \right) r^2_Y(u)du.
\end{align*}
Since $f_Y(x)\in L^2(\mathbb{R})$, the Fourier transform $r_Y(u)\in L^2(\mathbb{R})$ as well. So by the Dominated Convergence Theorem and Parseval-Plancherel's  identity, we have as $T\rightarrow\infty$
\begin{equation}\label{eq:M1}
\E \left[\left(L_T(t)-L_T(s)\right)^2\right] \rightarrow 2(t-s)\int_{-\infty}^\infty r^2_Y(u) du=4\pi (t-s) \int_{-\infty}^\infty f^2_Y(x)dx=\sigma_Y^2(t-s).
\end{equation}

\end{proof}

We now discuss some results which allow one to reduce the general quadratic
functional in Theorem \ref{Thm:CLT} to a special quadratic functional
introduced in Lemma \ref{Lem:Y(t) CLT}.

By Theorem 16.7.2 from \cite{IL}, the underlying process $X(t)$ admits a
moving average representation:
\begin{equation}\label{eq:moving average rep}
X(t)=\int_{-\infty}^\infty \hat{a}(t-s)B(ds) \qquad\text{with }\quad \int_{-\infty}^\infty |\hat{a}(t)|^2 dt <\infty,
\end{equation}
where $B(t)$ is a standard Brownian motion, and $\hat{a}(t)$ is such that its inverse Fourier transform $a(x)$ satisfies
$f(x)=2\pi |a(x)|^2.$
Assuming the conditions (\ref{eq:CLT condition 1}) and (\ref{eq:CLT condition 2}), we set
$$ b(x)=(2\pi)^{1/2} a(x) (g(x))^{1/2},$$
and observe that the function $b(x)$ is then in $L^2(\mathbb{R})$ due to condition (\ref{eq:CLT condition 1}).
Consider the stationary process
\begin{equation}\label{eq:Y(t)}
Y(t)=\int_{-\infty}^\infty \hat{b}(t-s)B(ds)
\end{equation}
constructed using the Fourier transform $\hat{b}(t)$ of $b(x)$ and the \emph{same} Brownian motion $B(t)$ as in (\ref{eq:moving average rep}). The process $Y(t)$ has spectral density  (see  \cite{GS2}, equation (4.7))
\begin{equation}\label{eq:f_Y}
f_Y(x)=2\pi f(x)g(x).
\end{equation}
We have the following approximation result which immediately follows from  Lemma 10 of \cite{GS2}.
\begin{Lem}\label{Lem:approx}
Let $\widetilde Q_T(t)$ be as in (\ref{eq:tilde Q_T(t)}) and let $L_T(t)$ be as in (\ref{eq:L_T(t)}) with $Y(t)$ constructed as in (\ref{eq:Y(t)}). Then under the conditions  (\ref{eq:CLT condition 1}) and (\ref{eq:CLT condition 2}), for any $t>0$, we have
\[
\lim_{T\rightarrow\infty }\Var [ \widetilde{Q}_T(t) -L_T (t)]=0.
\]
\end{Lem}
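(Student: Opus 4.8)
The plan is to reduce the assertion for an arbitrary fixed $t>0$ to the single case $t=1$, which is exactly the content of Lemma 10 of \cite{GS2}, by exploiting the scaling of the time horizon. The key observation is that for fixed $t$ neither $Q_T(t)$ nor $\int_0^{Tt}Y^2(u)\,du$ depends on $t$ except through the common upper limit $Tt$ of integration: both are the horizon-$(Tt)$ instances of the $t=1$ objects, and only the prefactor $T^{-1/2}$ differs from the ``natural'' normalization $(Tt)^{-1/2}$.

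First I would set $S:=Tt$ and abbreviate $\widehat Q_S:=\int_0^S\int_0^S\widehat g(u-v)X(u)X(v)\,du\,dv$ and $\widehat V_S:=\int_0^S Y^2(u)\,du$, so that $Q_T(t)=\widehat Q_S$ and $\int_0^{Tt}Y^2(u)\,du=\widehat V_S$. Since centering does not affect variances, the difference $\widetilde Q_T(t)-L_T(t)$ equals $T^{-1/2}\bigl[(\widehat Q_S-\E\widehat Q_S)-(\widehat V_S-\E\widehat V_S)\bigr]$, and comparing with the $t=1$ versions $\widetilde Q_S(1)$, $L_S(1)$ (which carry the normalization $S^{-1/2}$) gives the scaling identity
\[
\Var\bigl[\widetilde Q_T(t)-L_T(t)\bigr]
= T^{-1}\Var\bigl[\widehat Q_S-\widehat V_S\bigr]
= \tfrac{S}{T}\,S^{-1}\Var\bigl[\widehat Q_S-\widehat V_S\bigr]
= t\,\Var\bigl[\widetilde Q_S(1)-L_S(1)\bigr].
\]
Thus all of the $t$-dependence is isolated into the harmless multiplicative constant $t$.

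Next I would invoke Lemma 10 of \cite{GS2}, which states that under conditions (\ref{eq:CLT condition 1}) and (\ref{eq:CLT condition 2}) one has $\Var[\widetilde Q_S(1)-L_S(1)]\to0$ as $S\to\infty$. Since $t>0$ is fixed, $S=Tt\to\infty$ precisely when $T\to\infty$, and the scaling identity then yields $\Var[\widetilde Q_T(t)-L_T(t)]=t\cdot o(1)\to0$, which is the claim.

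The only genuine point to verify --- and the mild obstacle in an otherwise routine argument --- is that the hypotheses of Lemma 10 of \cite{GS2} survive the substitution $T\mapsto S=Tt$. Condition (\ref{eq:CLT condition 1}) constrains $f$ and $g$ alone and is untouched by the change of horizon. Condition (\ref{eq:CLT condition 2}) is a convergence as the horizon tends to infinity; since $S=Tt$ runs over a sequence tending to $\infty$ whenever $T\to\infty$, the horizon-$S$ variance is governed by the same limit $16\pi^3\int_{-\infty}^\infty f^2(x)g^2(x)\,dx$. Hence Lemma 10 applies verbatim at horizon $S$, and the reduction is complete.
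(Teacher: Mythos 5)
Your proof is correct and is essentially the paper's own argument: the paper gives no details, asserting only that the lemma ``immediately follows from Lemma 10 of \cite{GS2}'', and your rescaling identity $\Var[\widetilde Q_T(t)-L_T(t)]=t\,\Var[\widetilde Q_{Tt}(1)-L_{Tt}(1)]$ with $S=Tt\to\infty$ is precisely the routine reduction that assertion leaves implicit. Your extra check that conditions (\ref{eq:CLT condition 1}) and (\ref{eq:CLT condition 2}) are unaffected by the substitution $T\mapsto Tt$ is also sound, since the former is horizon-free and the latter is a limit statement invariant under this reparametrization.
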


The following lemma is a straightforward adaptation of Lemma 4.2 of \cite{giraitis:taqqu:1998:central}
%, which was stated for functions in $\mathbb{Z}$,
for functions defined on $\mathbb{R}$.
\begin{Lem}\label{Lem:L^p sum}
If $p_j\ge 1$, $j=1,\ldots,k$, where $k\ge 2$ and $\sum_{j=1}^k \frac{1}{p_j}= k-1,$
%\[
%\sum_{j=1}^k \frac{1}{p_j}= k-1,
%\]
then
\[\int_{\mathbb{R}^{k-1}} |f_1(x_1)\ldots f_{k-1}(x_{k-1}) f_{k}(x_1+\ldots+x_{k-1})| dx_1\ldots dx_k\le \prod_{j=1}^{k}\|f_j\|_{p_j}.
\]
\end{Lem}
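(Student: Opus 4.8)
The plan is to establish the inequality by induction on $k$, the inductive step consisting of integrating out one variable, recognizing the resulting inner integral as a convolution, and bounding it by the two-function Young inequality; Hölder's inequality furnishes the base case. The constraint $\sum_{j=1}^k 1/p_j = k-1$ is what makes the exponent bookkeeping close up, and it is cleanest to record it in conjugate form: writing $\bar p_j=p_j/(p_j-1)$, the hypothesis is equivalent to $\sum_{j=1}^k 1/\bar p_j = 1$ with every $1/\bar p_j \in [0,1]$. (With only $k-1$ integration variables, the differential in the statement should read $dx_1\cdots dx_{k-1}$.)

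First I would dispose of the base case $k=2$: there the hypothesis is $1/p_1+1/p_2=1$, and $\int_{\mathbb{R}}|f_1(x_1)f_2(x_1)|\,dx_1\le\|f_1\|_{p_1}\|f_2\|_{p_2}$ is exactly Hölder's inequality. For the inductive step, assume the result for $k-1$ functions and consider $f_1,\dots,f_k$ with $\sum_{j=1}^k 1/p_j=k-1$. Holding $x_1,\dots,x_{k-2}$ fixed and integrating in $x_{k-1}$, the inner integral is, with $c=x_1+\cdots+x_{k-2}$,
\[
F(c):=\int_{\mathbb{R}}|f_{k-1}(x_{k-1})|\,|f_k(c+x_{k-1})|\,dx_{k-1}=\bigl(|\check f_{k-1}|*|f_k|\bigr)(c),\qquad \check f_{k-1}(u):=f_{k-1}(-u).
\]
Because $1/\bar p_{k-1}+1/\bar p_k\le\sum_{j}1/\bar p_j=1$, one has $1/p_{k-1}+1/p_k\ge1$, so the two-function Young inequality applies and gives $\|F\|_{p'}\le\|f_{k-1}\|_{p_{k-1}}\|f_k\|_{p_k}$ with $1/p'=1/p_{k-1}+1/p_k-1\in[0,1]$.

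By Fubini the left-hand side then equals $\int_{\mathbb{R}^{k-2}}|f_1(x_1)\cdots f_{k-2}(x_{k-2})|\,F(x_1+\cdots+x_{k-2})\,dx_1\cdots dx_{k-2}$, which is an instance of the same inequality for the $k-1$ functions $f_1,\dots,f_{k-2},F$ and exponents $p_1,\dots,p_{k-2},p'$. These exponents satisfy the required constraint, since $\sum_{j=1}^{k-2}1/p_j+1/p'=\sum_{j=1}^{k}1/p_j-1=k-2$. The induction hypothesis bounds this by $\prod_{j=1}^{k-2}\|f_j\|_{p_j}\cdot\|F\|_{p'}$, and the Young bound on $\|F\|_{p'}$ then yields $\prod_{j=1}^{k}\|f_j\|_{p_j}$, closing the induction.

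The only thing to verify carefully is that Young's inequality is legitimately applicable at every stage, i.e. that the merged exponent $p'$ lies in $[1,\infty]$; this is automatic, because the conjugate reciprocals $1/\bar p_j$ are nonnegative and sum to $1$, so any two of them add up to at most $1$, forcing $1/p_{k-1}+1/p_k\ge1$. Since the merge replaces $1/\bar p_{k-1},1/\bar p_k$ by their sum $1/\bar{p'}=1/\bar p_{k-1}+1/\bar p_k$, the total $\sum_j 1/\bar p_j=1$ and the membership $1/\bar p_j\in[0,1]$ are preserved, so the recursion runs without obstruction. Relative to Lemma 4.2 of \cite{giraitis:taqqu:1998:central}, nothing changes except that sums over $\mathbb{Z}$ are replaced by integrals over $\mathbb{R}$, which is why the adaptation is ``straightforward''.
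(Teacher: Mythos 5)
Your proof is correct. Note that the paper itself contains no argument for this lemma: it is introduced only as ``a straightforward adaptation of Lemma 4.2 of \cite{giraitis:taqqu:1998:central} for functions defined on $\mathbb{R}$,'' with the proof deferred entirely to that reference. Your induction on $k$ --- H\"older's inequality as the base case $k=2$, then integrating out $x_{k-1}$, recognizing the inner integral as the convolution $\bigl(|\check f_{k-1}|*|f_k|\bigr)(x_1+\cdots+x_{k-2})$, and invoking the two-function Young inequality --- is the standard proof of the discrete analogue and transfers verbatim from $\mathbb{Z}$ to $\mathbb{R}$, so in effect you have supplied the proof the paper leaves implicit. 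Your exponent bookkeeping is sound at every point: the conjugate formulation $\sum_{j=1}^k 1/\bar p_j = 1$ with each $1/\bar p_j\in[0,1]$ forces $1/p_{k-1}+1/p_k\ge 1$, so the merged exponent $p'$ with $1/p'=1/p_{k-1}+1/p_k-1$ always lies in $[1,\infty]$ and Young applies (including the endpoint cases $p'=1$ and $p'=\infty$); the merge replaces $1/\bar p_{k-1},1/\bar p_k$ by their sum, so the invariant is preserved and the reduced problem satisfies $\sum_{j\le k-2}1/p_j+1/p'=k-2$, exactly the hypothesis for $k-1$ functions; and Tonelli justifies the Fubini step since the integrand is nonnegative. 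You are also right that the differential in the statement should read $dx_1\cdots dx_{k-1}$ rather than $dx_1\ldots dx_k$ --- that is a typo in the paper, as only $k-1$ variables are integrated.
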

The following lemma will be used to establish tightness in the space $C[0,1]$
in Theorem \ref{Thm:CLT C[0,1]}.
\begin{Lem}\label{Lem:square increment CLT}
Let the covariance function $r(t)$ and the generating kernel $a(t)$ satisfy condition (\ref{eq:CLT tightness cond}), and let  $\widetilde Q_T(t)$ be as in (\ref{eq:tilde Q_T(t)}). Then for all $0\le s\le t\le 1$ and $T>0$, there exists a constant $C>0$, such that
\begin{equation}\label{eq:increment est CLT}
\E\left[|\widetilde Q_T(t)-\widetilde Q_T(s)|^2 \right]\le C (t-s).
\end{equation}
\end{Lem}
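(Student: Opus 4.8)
The plan is to compute the variance of the increment explicitly as a fourfold integral and to extract from it exactly one factor of $T(t-s)$, bounding the remaining integration uniformly in $T,s,t$ by means of Lemma~\ref{Lem:L^p sum}. Since $\widetilde Q_T(t)-\widetilde Q_T(s)=T^{-1/2}\bigl[(Q_T(t)-Q_T(s))-\E(Q_T(t)-Q_T(s))\bigr]$, and by (\ref{eq:quad proc}) the increment $Q_T(t)-Q_T(s)$ is the quadratic functional $\int\!\!\int_{A}a(u-v)X(u)X(v)\,du\,dv$ over the L-shaped region $A:=[0,Tt]^2\setminus[0,Ts]^2$ (with $a=\widehat g$), I would invoke the Gaussian product formula $\Cov(X(u)X(v),X(u')X(v'))=r(u-u')r(v-v')+r(u-v')r(v-u')$, where $r=\widehat f$. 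Using the symmetry of $a$ and of $A$ under $(u,v)\mapsto(v,u)$, this gives
\[
\Var[Q_T(t)-Q_T(s)]=2\int_{A}\!\int_{A} a(u-v)\,a(u'-v')\,r(u-u')\,r(v-v')\,du\,dv\,du'\,dv'.
\]
Passing to absolute values inside the integral, it then suffices to bound this by $C\,T(t-s)$, since dividing by $T$ yields (\ref{eq:increment est CLT}).

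Next I would isolate the short interval. Writing $I:=(Ts,Tt]$, of length $T(t-s)$, and $J:=[0,Tt]$, one has the pointwise bound $\mathbf{1}_A(u,v)\le \mathbf{1}_I(u)\mathbf{1}_J(v)+\mathbf{1}_J(u)\mathbf{1}_I(v)$, since every point of $A$ has at least one coordinate exceeding $Ts$. Because the integrand is nonnegative once absolute values are taken, I would discard the membership constraint on the second pair $(u',v')$ (replacing $\mathbf{1}_A(u',v')$ by $1$) and, using the $(u,v)\leftrightarrow(u',v')$ symmetry, reduce matters to bounding a single representative term in which $u$ is constrained to the short interval $I$ while $v,u',v'$ range freely over $\mathbb{R}$.

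Finally, I would perform the change of variables $w_1=u-v$, $w_2=u-u'$, $w_3=u-v'$ (Jacobian $1$), under which the integrand no longer depends on $u$ and equals $|a(w_1)|\,|r(w_2)|\,|a(w_3-w_2)|\,|r(w_3-w_1)|$. Integrating the free variable $u$ over $I$ then contributes exactly the factor $|I|=T(t-s)$, while the remaining integral over $(w_1,w_2,w_3)\in\mathbb{R}^3$, after a further linear substitution and using that $r$ and $a$ are even, takes the form
\[
\int_{\mathbb{R}^3}|a(x_1)|\,|r(x_2)|\,|a(x_3)|\,|r(x_1+x_2+x_3)|\,dx_1\,dx_2\,dx_3,
\]
which is controlled by Lemma~\ref{Lem:L^p sum} with $k=4$ and exponents $(q',p',q',p')$, provided $2/p'+2/q'=3$. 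Condition (\ref{eq:CLT tightness cond}) supplies $1/p+1/q\ge 3/2$ with $1\le p,q\le 2$; since $r=\widehat f$ and $a=\widehat g$ are bounded, being Fourier transforms of the $L^1$ functions $f,g$, they lie in every $L^{p'}$ and $L^{q'}$ with $p'\ge p$, $q'\ge q$, so one may enlarge the exponents to reach the equality $1/p'+1/q'=3/2$ demanded by the lemma. This produces the bound $C\,T(t-s)$ with $C=4\|a\|_{q'}^2\|r\|_{p'}^2$, and dividing by $T$ gives (\ref{eq:increment est CLT}).

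The main obstacle is the bookkeeping in the middle step: one must ensure that the factor $T(t-s)$ is paid exactly once—arising from the single short interval $I$—while the remaining three integrations decouple completely from the scale $T$ and fit precisely the translation-invariant form required by Lemma~\ref{Lem:L^p sum}. The genuinely delicate point is the discrepancy between the inequality $1/p+1/q\ge 3/2$ available from (\ref{eq:CLT tightness cond}) and the equality $1/p'+1/q'=3/2$ needed to apply the lemma, which is resolved by exploiting the $L^\infty$ boundedness of $r$ and $a$.
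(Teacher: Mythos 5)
Your proof is correct and follows essentially the same route as the paper's: expand the second moment of the increment via the Gaussian (Isserlis/Wick) covariance formula, change variables so that a single variable ranges over the short interval of length $T(t-s)$ while the remaining triple integral becomes a $T$-free convolution-type integral controlled by Lemma~\ref{Lem:L^p sum}, using $L^\infty$-boundedness of $r$ and $a$ together with $L^p$-interpolation to pass from the inequality $1/p+1/q\ge 3/2$ to the exact equality $\sum_j 1/p_j=3$ that the lemma requires. The only cosmetic differences are that the paper splits the increment into a square term $A(s,t,T)$ and a cross term $B(s,t,T)$ rather than using your L-shaped indicator bound, and estimates the two Isserlis terms separately where you dispatch one of them by a symmetry argument.
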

\begin{proof}
For convenience we use the Wick product notation:
$
:X(u) X(v):= X(u) X(v) -\E \left[X(u)X(v)\right].
$ So for $0\le s\le t\le 1$, we can write
\begin{align*}
\widetilde Q_T(t)-\widetilde Q_T(s)&=\frac{1}{\sqrt{T}}\left(\int_{0}^{Tt}\int_0^{Tt} a(u-v) :X(u)X(v):  dudv-\int_{0}^{Ts}\int_0^{Ts} a(u-v) :X(u)X(v):  dudv \right)\notag\\
&=\frac{1}{\sqrt{T}}\int_{Ts}^{Tt}\int_{Ts}^{Tt} a(u-v) :X(u)X(v):  dudv  + \frac{2}{\sqrt{T}}\int_{0}^{Ts}\int_{Ts}^{Tt} a(u-v) :X(u)X(v):  dudv  \notag\\
:&=A(s,t,T)+B(s,t,T).
\end{align*}
Now we estimate  $B(s,t,T)$ (the function $A(s,t,T)$ can be estimated similarly). We have by Theorem 3.9 of \cite{janson:1997:gaussian} that
\begin{align*}
&\E \left[B^2(s,t,T)\right] = \frac{4}{T}\int_0^{Ts}du_1\int_{Ts}^{Tt}dv_1\int_0^{Ts}du_2\int_{Ts}^{Tt}dv_2 a(u_1-v_1)a(u_2-v_2) \E \left(:X(u_1)X(v_1): :X(u_2)X(v_2):\right)  \\
=&\frac{4}{T}\int_0^{Ts}du_1\int_{Ts}^{Tt}dv_1\int_0^{Ts}du_2\int_{Ts}^{Tt}dv_2a(u_1-v_1)a(u_2-v_2) \left[r(u_1-u_2)r(v_1-v_2)+r(u_1-v_2)r(v_1-u_2) \right]\\
:&=B_1(s,t,T)+B_2(s,t,T).
\end{align*}
By the change of variables $x_1=u_1-v_1$, $x_2=v_2-u_2$, $x_3=u_2-u_1$, $x_4=v_2$, and noting that $r(\cdot)$ and $a(\cdot)$ are even functions, we have
\[
B_1(s,t,T)\le \frac{4}{T} \int_{Ts}^{Tt}dx_4 \int_{\mathbb{R}^3} |a(x_1)a(x_2)r(x_3)r(x_1+x_2+x_3)| dx_1dx_2dx_3.
\]
Since  $|r(t)|\le r(0)$, we have $r(\cdot)\in L^\infty(\mathbb{R})$. We also have $r(\cdot)\in L^p(\mathbb{R})$ by condition (\ref{eq:CLT tightness cond}), where $1/p+1/q\ge 3/2$. The $L^p$-interpolation theorem states that if a function is in $L^{p_1}$ and $L^{p_2}$ with $0<p_1\le p_2\le \infty$, then it is in $L^{p'}$, $p_1\le p'\le p_2$. By the $L^p$-interpolation theorem, one can choose $p'\ge p$ such that $r(\cdot)\in L^{p'}(\mathbb{R})$ and
\[
\frac{1}{p'}+\frac{1}{p'}+\frac{1}{q}+\frac{1}{q}=3, ~\text{ that is, }~ \frac{1}{p'}+\frac{1}{q}=\frac{3}{2}.
\]
Then by Lemma \ref{Lem:L^p sum}, one has $
B_1(s,t,T)\le 4\|r\|_{p'}^2 \|a\|_{q}^2 (t-s). $
Similarly, one can establish the bound
$
B_2(s,t,T)\le C (t-s),
$
and hence
$
B(s,t,T)\le C(t-s).
$
So (\ref{eq:increment est CLT}) is proved.

\end{proof}

The lemmas that follow will be used in the proof of Theorem \ref{Thm:NCLT}.

\begin{Lem}\label{Lem:bound delta}
Define
\begin{equation}\label{eq:Delta_t}
\Delta_{t}(x)=\int_0^t e^{isx} ds=\frac{e^{itx}-1}{ix},
\end{equation}
Then for any $\delta\in (0,1)$, there exists a constant $c>0$ depending only on $\delta$, such that
\begin{equation}\label{eq:M3}
|\Delta_t(x)|\le c |t|^{\delta} f_{\delta}(x), \quad t\in [0,1], ~x\in \mathbb{R},
\end{equation}
where
\begin{equation}\label{eq:f_delta}
f_{\delta}(x)=
\begin{cases}
|x|^{\delta-1} &\text{ if }|x|>1;\\
1 &\text{ if } |x| \le 1.
\end{cases}
\end{equation}
\end{Lem}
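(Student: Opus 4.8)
The plan is to deduce the bound from two elementary pointwise estimates on $\Delta_t(x)$, combined by a simple multiplicative interpolation, after splitting $\mathbb{R}$ into the two regions $|x|\le 1$ and $|x|>1$ on which $f_\delta$ is defined by different formulas. No delicate analysis is needed; the whole argument rests on two inequalities for $\Delta_t$ raised to complementary powers.

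First I would record the two basic bounds. From the integral representation $\Delta_t(x)=\int_0^t e^{isx}\,ds$ and $|e^{isx}|=1$, one gets directly
\[
|\Delta_t(x)|\le \int_0^t |e^{isx}|\,ds = t, \qquad t\in[0,1].
\]
From the closed form $\Delta_t(x)=(e^{itx}-1)/(ix)$ together with $|e^{itx}-1|\le 2$, one gets
\[
|\Delta_t(x)| = \frac{|e^{itx}-1|}{|x|}\le \frac{2}{|x|}, \qquad x\neq 0.
\]
On the region $|x|\le 1$, where $f_\delta(x)=1$, I would use only the first bound: since $t\in[0,1]$ and $\delta\in(0,1)$ we have $t^{1-\delta}\le 1$, hence $t=t^{1-\delta}t^\delta\le t^\delta$, so that $|\Delta_t(x)|\le t\le t^\delta = t^\delta f_\delta(x)$.

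On the region $|x|>1$, where $f_\delta(x)=|x|^{\delta-1}$, I would interpolate between the two bounds by writing $|\Delta_t(x)|=|\Delta_t(x)|^\delta\,|\Delta_t(x)|^{1-\delta}$ and applying the first estimate to the factor with exponent $\delta$ and the second to the factor with exponent $1-\delta$:
\[
|\Delta_t(x)| \le t^\delta \left(\frac{2}{|x|}\right)^{1-\delta} = 2^{1-\delta}\, t^\delta |x|^{\delta-1} = 2^{1-\delta}\, t^\delta f_\delta(x).
\]
Combining the two regions and taking $c=2^{1-\delta}$, which is $\ge 1$ since $\delta<1$ and depends only on $\delta$, yields (\ref{eq:M3}). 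The only point requiring attention is that the two bounds are raised to the complementary exponents $\delta$ and $1-\delta$ precisely so that the resulting powers of $t$ and of $|x|$ match those prescribed by $f_\delta$; there is no real obstacle, as everything reduces to these two elementary estimates for $\Delta_t$.
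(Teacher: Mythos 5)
Your proof is correct and follows essentially the same route as the paper: the trivial bound $|\Delta_t(x)|\le t\le t^\delta$ on $|x|\le 1$, and a bound of the form $ct^\delta|x|^{\delta-1}$ on $|x|>1$. The only difference is cosmetic: where the paper invokes the inequality $|e^{ix}-1|\le C|x|^\delta$ from Lemma 2 of Terrin and Taqqu, you obtain the same estimate self-containedly by interpolating $|\Delta_t(x)|=|\Delta_t(x)|^\delta|\Delta_t(x)|^{1-\delta}\le t^\delta\left(2/|x|\right)^{1-\delta}$, which even yields the explicit constant $c=2^{1-\delta}$.
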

\begin{proof}
In view of (\ref{eq:Delta_t}), we have $
|\Delta_t(x)|\le \int_0^t |e^{isx}|ds=t.
$
So under the constraint $t\in [0,1]$, we have
$
|\Delta_t(x)|\le t \le t^{\delta}.
$
On the other hand, from Lemma 2 from \cite{terrin:taqqu:1990:noncentral}, with some constant $C>0$, we have
$|e^{ix}-1|\le C |x|^{\delta}$,  $\delta\in(0,1).$ So
\[
|\Delta_t(x)|\le \frac{|e^{itx}-1|}{|x|}\le C |tx|^{\delta} |x|^{-1}=C t^{\delta} |x|^{\delta-1}.
\]
Combining this with (\ref{eq:f_delta}), we obtain (\ref{eq:M3}).
\end{proof}

We quote Lemma 1 of \cite{terrin:taqqu:1990:noncentral} in a special case,
convenient for our purposes.
\begin{Lem}\label{Lem:power counting}
Let $\gamma_i<1$, $\gamma_{i}+\gamma_{i+1}>1/2$, and let $\delta$ be such that
$$
0\le \delta<\frac{\gamma_i+\gamma_{i+1}}{2},
$$
 where $i=1,\ldots,4$ (with $\gamma_5=\gamma_1$). Then
\[
\int_{\mathbb{R}^4} f_\delta(y_1-y_2)f_\delta(y_2-y_3)f_\delta(y_3-y_4)f_\delta(y_4-y_1)  |y_1|^{-\gamma_1}|y_2|^{-\gamma_2}|y_3|^{-\gamma_3}|y_4|^{-\gamma_4}d\mathbf{y}<\infty,
\]
where $f_\delta(\cdot)$ is as in (\ref{eq:f_delta}).
\end{Lem}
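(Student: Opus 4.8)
The plan is to treat this as a \emph{power-counting} integral attached to the cyclic graph $C_4$ on the vertex set $\{1,2,3,4\}$, whose edges are the consecutive pairs $(1,2),(2,3),(3,4),(4,1)$. The first observation is that, since $\delta-1<0$, one has the clean identity $f_\delta(x)=\min(1,|x|^{\delta-1})$; in particular each edge factor is \emph{bounded} (it equals $1$ on the diagonal, so no singularity is created when $y_i=y_{i+1}$) and decays like $|y_i-y_{i+1}|^{\delta-1}$ at infinity, while each vertex factor $|y_i|^{-\gamma_i}$ carries the only genuine singularity, located at $y_i=0$. Consequently local integrability is immediate: near the origin the integrand is dominated by $\prod_i|y_i|^{-\gamma_i}$, and $\int_{[-1,1]^4}\prod_i|y_i|^{-\gamma_i}\,d\mathbf{y}=\prod_i\int_{-1}^{1}|y_i|^{-\gamma_i}\,dy_i<\infty$ precisely because $\gamma_i<1$. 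The entire difficulty is therefore the decay at infinity.

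To control the behaviour at infinity I would invoke the multidimensional power-counting theorem (this is exactly Lemma 1 of \cite{terrin:taqqu:1990:noncentral} in the present special case), which reduces convergence to a finite family of homogeneity inequalities, one for each subset or coincidence subspace generated by the vertex directions $y_i$ and the edge-difference directions $y_i-y_{i+1}$. For a subset $S\subseteq\{1,2,3,4\}$ of vertices dilated at a common rate, letting $E(S)$ be the number of edges incident to $S$, the relevant exponent is
\[
e(S)=|S|+(\delta-1)\,E(S)-\sum_{i\in S}\gamma_i,
\]
and one needs $e(S)<0$ for every nonempty $S$, together with the analogous small-scale inequalities $\sum_{i\in S}\gamma_i<|S|$, which are trivial here since $\gamma_i<1$. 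I would check these one cardinality at a time. For $S=\{1,2,3,4\}$ one has $E(S)=4$ and $e(S)=4\delta-\sum_i\gamma_i$, which is negative because summing the four hypotheses $\delta<(\gamma_i+\gamma_{i+1})/2$ yields $4\delta<\sum_i\gamma_i$. For a single vertex, $E=2$ and $e=2\delta-1-\gamma_i<0$ since $\delta<(\gamma_{i-1}+\gamma_i)/2<(1+\gamma_i)/2$. For an adjacent pair, $E=3$ and $e=3\delta-1-\gamma_i-\gamma_{i+1}<0$ using $\delta<(\gamma_i+\gamma_{i+1})/2$ and $\gamma_i+\gamma_{i+1}<2$; the opposite-pair and triple cases ($E=4$) follow similarly from $4\delta<\sum_i\gamma_i$ together with $\gamma_j<1$. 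Thus the three families of hypotheses are exactly calibrated to make every subset exponent negative.

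The main obstacle is that the vertex-subset corners above are \emph{not} the whole story: the complete power-counting criterion also demands the corresponding inequalities on the coincidence subspaces in which several variables cluster together while the cluster is simultaneously dilated, that is, the two-scale (Hepp) sectors in which a difference $y_i-y_{i+1}$ and the bulk variables grow at different rates. Bookkeeping these sectors amounts to tracking, through each successive integration, a two-exponent profile for the partially integrated kernel (one exponent governing its behaviour near $0$, another near $\infty$) and verifying that the relevant partial sums of the $\gamma_i$ and of $\delta$ remain inside the convergent range at every stage; this is precisely the technical heart of the argument, and is the reason we prefer to quote \cite{terrin:taqqu:1990:noncentral} rather than reproduce it. Once all sector exponents are seen to be strictly negative at infinity and strictly subcritical at the origin, the power-counting theorem delivers finiteness of the integral, completing the proof.
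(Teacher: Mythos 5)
The paper gives no proof of this lemma at all: it is quoted directly, as a special case, from Lemma 1 of \cite{terrin:taqqu:1990:noncentral}, and your argument ultimately rests on exactly the same citation, since you explicitly defer the decisive coincidence-sector (Hepp-sector) bookkeeping to that reference. Your supplementary power-counting checks are correct as far as they go --- local integrability from $\gamma_i<1$, and the corner exponents $e(S)<0$ for every vertex subset $S$ (the full set via $4\delta<\sum_i\gamma_i$, singletons via $2\delta<1+\gamma_i$, adjacent pairs via $\gamma_i+\gamma_{i+1}<2$, and the $E(S)=4$ cases) --- so your proposal is essentially the paper's route with a correct sanity check attached, not an independent proof.
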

Lemma \ref{Lem:power counting} can be used to establish the following result.
\begin{Lem}\label{Lem:L^2 1}
The function
\begin{equation}\label{eq:H_t*}
H_{t}^*(x_1,x_2):= |x_1|^{\alpha_1/2}|x_2|^{\alpha_2/2}\int_{\mathbb{R}}|\Delta_{t}(x_1+u)\Delta_{t}(x_2-u)| |u|^{-\beta}du~
\end{equation}
is in $L^2(\mathbb{R}^2)$ for all $(\alpha_1,\alpha_2,\beta)$ in the open region $\{(\alpha_1,\alpha_2,\beta):~\alpha_1,\alpha_2,\beta<1,~\alpha_i+\beta>1/2, ~i=1,2\}$.
\end{Lem}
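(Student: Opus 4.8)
The plan is to establish $H_t^*\in L^2(\mathbb{R}^2)$ directly: square the norm, bound each increment factor $\Delta_t$ by $f_\delta$ via Lemma~\ref{Lem:bound delta}, and then reduce the resulting four-fold integral to the convergent power-counting integral of Lemma~\ref{Lem:power counting} through a single linear change of variables. Since $t\in[0,1]$ is fixed throughout, every $t$-dependent constant that appears is harmless for the question of integrability.

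First I would write
\[
\|H_t^*\|_{L^2}^2=\int_{\mathbb{R}^2}|x_1|^{-\alpha_1}|x_2|^{-\alpha_2}\left(\int_{\mathbb{R}}|\Delta_t(x_1+u)\Delta_t(x_2-u)|\,|u|^{-\beta}\,du\right)^{2}dx_1\,dx_2
\]
and expand the square of the inner integral by introducing a second dummy variable $u'$. This rewrites $\|H_t^*\|_{L^2}^2$ as a single integral over $(x_1,x_2,u,u')\in\mathbb{R}^4$ whose integrand is the product of the four power weights $|x_1|^{-\alpha_1},|x_2|^{-\alpha_2},|u|^{-\beta},|u'|^{-\beta}$ and the four moduli $|\Delta_t(x_1+u)|,|\Delta_t(x_2-u)|,|\Delta_t(x_1+u')|,|\Delta_t(x_2-u')|$. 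Applying Lemma~\ref{Lem:bound delta} to each of the four $|\Delta_t(\cdot)|$ with one common exponent $\delta\in(0,1)$ (to be fixed at the end) replaces every $|\Delta_t(y)|$ by $c\,t^\delta f_\delta(y)$ and pulls out the finite constant $c^4 t^{4\delta}$, reducing the problem to the finiteness of
\[
\int_{\mathbb{R}^4} |x_1|^{-\alpha_1}|x_2|^{-\alpha_2}|u|^{-\beta}|u'|^{-\beta}\,f_\delta(x_1+u)\,f_\delta(x_2-u)\,f_\delta(x_1+u')\,f_\delta(x_2-u')\,dx_1\,dx_2\,du\,du'.
\]

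The heart of the argument is a volume-preserving change of variables putting this into the cyclic shape of Lemma~\ref{Lem:power counting}. The four arguments share variables in the pattern of a $4$-cycle $x_1\!-\!u\!-\!x_2\!-\!u'\!-\!x_1$ (each of $x_1,x_2,u,u'$ occurs in exactly two of the arguments), so I would set $y_1=x_1$, $y_2=-u$, $y_3=-x_2$, $y_4=-u'$, which has Jacobian $\pm1$. Then $y_1-y_2=x_1+u$ and $y_2-y_3=x_2-u$, while $y_3-y_4=-(x_2-u')$ and $y_4-y_1=-(x_1+u')$; since $f_\delta$ is even, the four $f_\delta$ factors become precisely $f_\delta(y_1-y_2)f_\delta(y_2-y_3)f_\delta(y_3-y_4)f_\delta(y_4-y_1)$, and the four power weights become $\prod_{i=1}^4|y_i|^{-\gamma_i}$ with exponents arranged cyclically as $(\gamma_1,\gamma_2,\gamma_3,\gamma_4)=(\alpha_1,\beta,\alpha_2,\beta)$. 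Thus each cyclically consecutive pair satisfies $\gamma_i+\gamma_{i+1}=\alpha_i+\beta$ (with the convention $\alpha_3:=\alpha_1$). By hypothesis $\gamma_i<1$ for every $i$ and $\gamma_i+\gamma_{i+1}=\alpha_i+\beta>1/2$, so I may finally fix $\delta\in\bigl(0,\min\{1,\tfrac12(\alpha_1+\beta),\tfrac12(\alpha_2+\beta)\}\bigr)$; then all hypotheses of Lemma~\ref{Lem:power counting} hold, it yields the finiteness of the integral, and hence $H_t^*\in L^2(\mathbb{R}^2)$.

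The step I expect to be the main obstacle is exactly this change of variables together with the attendant exponent bookkeeping. One must choose the signs in the substitution so that all four arguments become \emph{consecutive} cyclic differences simultaneously (a single consistent sign assignment on the four variables, which works because the signs around the cycle close up), and then verify that the induced weight exponents pair as $\alpha_i+\beta$ rather than as $\beta+\beta$ or $\alpha_1+\alpha_2$; it is precisely this pairing that matches the open region $\alpha_i+\beta>1/2$. A secondary point is that a single $\delta$ serves all four $f_\delta$ factors at once, which is possible exactly because the region is open and the strict inequalities $\alpha_i+\beta>1/2$ leave room strictly below $\min_i\tfrac12(\alpha_i+\beta)$.
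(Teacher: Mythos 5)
Your argument is correct and is essentially the paper's own proof: the paper likewise writes $\|H_t^*\|_{L^2(\mathbb{R}^2)}^2$ as a four-fold integral in the cyclic form with weights $(\alpha_1,\beta,\alpha_2,\beta)$, bounds each $|\Delta_t|$ by $f_\delta$ via Lemma~\ref{Lem:bound delta}, and applies Lemma~\ref{Lem:power counting} with $\delta$ chosen sufficiently small. Your explicit substitution $y_1=x_1$, $y_2=-u$, $y_3=-x_2$, $y_4=-u'$ (using that $f_\delta$ is even) merely fills in the ``suitable change of variables'' that the paper leaves implicit, and your choice $\delta<\min\bigl\{\tfrac12(\alpha_1+\beta),\tfrac12(\alpha_2+\beta)\bigr\}$ matches the paper's remark that $\delta$ can be taken arbitrarily small.
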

\begin{proof}
It suffices  focus on the case where $t\in [0,1]$, otherwise a change of variable can reduce it to this case.
We have  by  suitable change of variables and Lemma \ref{Lem:bound delta} that
\begin{align*}
\|H_{t}^*\|_{L^2(\mathbb{R}^2)}^2&=\int_{\mathbb{R}^4} \big|\Delta_t(y_1-y_2)\Delta_t(y_2-y_3)\Delta_t(y_3-y_4)\Delta_t(y_4-y_1)\big|  |y_1|^{-\alpha_1}|y_2|^{-\beta}|y_3|^{-\alpha_2}|y_4|^{-\beta}d\mathbf{y}\\
&\le C \int_{\mathbb{R}^4} f_\delta(y_1-y_2)f_\delta(y_2-y_3)f_\delta(y_3-y_4)f_\delta(y_4-y_1)  |y_1|^{-\alpha_1}|y_2|^{-\beta}|y_3|^{-\alpha_2}|y_4|^{-\beta}d\mathbf{y}.
\end{align*}
Then apply Lemma \ref{Lem:power counting}, noting that $\delta$ can be chosen arbitrarily small.
\end{proof}

\begin{Lem}\label{Lem:L^2}
Define the function
\begin{equation}\label{eq:H*_{t,T}}
H^*_{t,T}(x_1,x_2)=A_{1,T}(x_1,x_2) |x_1x_2|^{-\alpha/2}\int_\mathbb{R} ~ |\Delta_{t}(x_1+u) \Delta_{t}(x_2-u)| |u|^{-\beta}  A_{2,T}(u) ~du,
\end{equation}
where
\begin{equation}\label{eq:A}
A_{1,T}(x_1,x_2)=\sqrt{\frac{L_1(x_1/T)}{L_1(1/T)}\frac{L_1(x_2/T)}{L_1(1/T)}},\quad A_{2,T}(u)=\frac{L_2(u/T)}{L_2(1/T)}.
\end{equation}
Then for large enough $T$, we have
$
H^*_{t,T}(x_1,x_2)\in L^2(\mathbb{R}^2).
$
\end{Lem}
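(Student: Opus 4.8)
The plan is to reduce the claim to the power-counting estimate of Lemma~\ref{Lem:power counting} (the same tool underlying Lemma~\ref{Lem:L^2 1}), by absorbing the slowly varying correction factors $A_{1,T}$ and $A_{2,T}$ into arbitrarily small perturbations of the exponents $\alpha$ and $\beta$. As in the proof of Lemma~\ref{Lem:L^2 1}, it suffices to treat $t\in[0,1]$, the general case following by scaling. Squaring $H^*_{t,T}$ and expanding the inner integral over two dummy variables $u,u'$, I would write
\[
\|H^*_{t,T}\|_{L^2(\mathbb{R}^2)}^2=\int_{\mathbb{R}^4} A_{1,T}^2(x_1,x_2)\,|x_1x_2|^{-\alpha}\,\big|\Delta_t(x_1+u)\Delta_t(x_2-u)\Delta_t(x_1+u')\Delta_t(x_2-u')\big|\,|u|^{-\beta}|u'|^{-\beta}A_{2,T}(u)A_{2,T}(u')\,dx_1dx_2\,du\,du',
\]
which is exactly the integral appearing in the proof of Lemma~\ref{Lem:L^2 1}, now carrying the extra factors $A_{1,T}^2(x_1,x_2)$ and $A_{2,T}(u)A_{2,T}(u')$.

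Next I would invoke Potter's bound (\ref{eq:potter}): fixing a small $\epsilon>0$ and taking $T$ large enough, one has
\[
A_{1,T}^2(x_1,x_2)\le C\,(|x_1|^{\epsilon}+|x_1|^{-\epsilon})(|x_2|^{\epsilon}+|x_2|^{-\epsilon}),\qquad A_{2,T}(u)A_{2,T}(u')\le C\,(|u|^{\epsilon}+|u|^{-\epsilon})(|u'|^{\epsilon}+|u'|^{-\epsilon}).
\]
Multiplying these out and distributing over the integral decomposes $\|H^*_{t,T}\|_{L^2}^2$ into a finite sum of $16$ integrals. In each summand I absorb the power factors into the prefactor before changing variables: $|x_1|^{-\alpha}$ is replaced by $|x_1|^{-(\alpha\mp\epsilon)}$ (and likewise for $x_2$), while $|u|^{-\beta}$ is replaced by $|u|^{-(\beta\mp\epsilon)}$ (and likewise for $u'$), with the four signs chosen independently. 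Performing the same linear change of variables as in Lemma~\ref{Lem:L^2 1} and bounding each $\Delta_t$ factor by $c\,f_\delta$ via Lemma~\ref{Lem:bound delta} (with $\delta$ as small as we wish), each summand becomes precisely the integral of Lemma~\ref{Lem:power counting} with four \emph{independent} exponents $\gamma_1,\gamma_2,\gamma_3,\gamma_4$, each lying within $\epsilon$ of the respective base value in $(\alpha,\beta,\alpha,\beta)$. It is here that I rely on Lemma~\ref{Lem:power counting} directly, rather than on Lemma~\ref{Lem:L^2 1}, since the two $\beta$-exponents (coming from $u$ and $u'$) may now be shifted in opposite directions.

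It then remains to check that the hypotheses of Lemma~\ref{Lem:power counting} hold for every summand, namely $\gamma_i<1$, $\gamma_i+\gamma_{i+1}>1/2$ (with $\gamma_5=\gamma_1$), and $0\le\delta<(\gamma_i+\gamma_{i+1})/2$. Since each consecutive pair $\gamma_i+\gamma_{i+1}$ consists of one $\alpha$-type and one $\beta$-type exponent, the worst cases read $\alpha+\epsilon<1$, $\beta+\epsilon<1$ and $(\alpha-\epsilon)+(\beta-\epsilon)>1/2$; under the standing hypotheses of Theorem~\ref{Thm:NCLT} the base point satisfies $\alpha<1$, $\beta<1$ and $\alpha+\beta>1/2$ \emph{strictly}, so all of these hold once $\epsilon$ is small enough, and $\delta$ may then be chosen small accordingly. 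Consequently each of the $16$ summands is finite by Lemma~\ref{Lem:power counting}, and hence so is their sum, giving $H^*_{t,T}\in L^2(\mathbb{R}^2)$ for $T$ large enough. The only genuinely delicate point—and the real content of the argument—is precisely this use of \emph{openness}: Potter's bound controls the slowly varying factors only up to an arbitrarily small power $\epsilon$, so the conclusion depends entirely on the hypotheses placing $(\alpha,\beta)$ strictly inside the admissible region, leaving room to absorb the perturbation.
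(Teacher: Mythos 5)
Your proof is correct and takes essentially the same route as the paper: Potter's bound (\ref{eq:potter app 1})--(\ref{eq:potter app 2}) absorbs $A_{1,T}$ and $A_{2,T}$ into $\pm\epsilon$ perturbations of the exponents, and the conclusion rests on the strict inequalities $\alpha,\beta<1$, $\alpha+\beta>1/2$ leaving room in the open admissible region of the power-counting estimate. The only cosmetic difference is that you expand $\|H^*_{t,T}\|_{L^2}^2$ into $16$ integrals and invoke Lemma \ref{Lem:power counting} directly so that the $u$- and $u'$-exponents may shift independently, whereas the paper splits the pointwise bound (\ref{eq:bound L^2}) into a finite sum of functions of the form (\ref{eq:H_t*}) with perturbed parameters $(\alpha_1,\alpha_2,\beta')$ and cites Lemma \ref{Lem:L^2 1}, the triangle inequality in $L^2$ then handling the sum --- your version just makes explicit the bookkeeping that the paper's one-line appeal to openness compresses.
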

\begin{proof}
By (\ref{eq:potter}) and (\ref{eq:A}), for any $\epsilon>0$ there exists $C>0$, such that for $T$  large enough,
\begin{equation}\label{eq:potter app 1}
|A_{1,T}(x_1,x_2)|\le C (|x_1|^\epsilon + |x_1|^{-\epsilon})(|x_2|^\epsilon+|x_2|^{-\epsilon})
\end{equation}
and
\begin{equation}\label{eq:potter app 2}
 |A_{2,T}(u)|\le C (|u|^\epsilon + |u|^{-\epsilon}).
\end{equation}
Hence, with some constant $C>0$,
\begin{align}\label{eq:bound L^2}
|H_{t,T}^*(x_1,x_2)|\le
 C  \int_{\mathbb{R}} ~|\Delta_{t}(x_1+u) \Delta_{t}(x_2-u)||u|^{-\beta} (|u|^\epsilon + |u|^{-\epsilon}) du~|x_1x_2|^{-\alpha/2} (|x_1|^\epsilon + |x_1|^{-\epsilon})(|x_2|^\epsilon+|x_2|^{-\epsilon}).
\end{align}
Because  by Lemma \ref{Lem:L^2 1}, the function  $H_t^*$ in (\ref{eq:H_t*})
is in $L^2(\mathbb{R}^2)$ for all $(\alpha_1,\alpha_2,\beta)$ in an open region $\{(\alpha,\beta): \alpha_1,\alpha_2,\beta<1, \alpha_i+\beta>1/2, i=1,2\}$.
By choosing $\epsilon$ small enough, we  infer that the right-hand side of (\ref{eq:bound L^2}) is in $L^2(\mathbb{R}^2)$, and the result follows.
\end{proof}

\begin{Lem}\label{Lem:rewrite}
Let $Z_T(t)$ be as in (\ref{eq:Z_T}), and let
\begin{align}\label{eq:rewrite}
Z_T'(t):=\int_{\mathbb{R}^2}'' ~
H_{t,T}(x_1,x_2)~  W(dx_1) W(dx_2),
\end{align}
where
\begin{equation}\label{eq:H_t,T}
H_{t,T}(x_1,x_2)=  A_{1,T}(x_1,x_2) |x_1x_2|^{-\alpha/2}~  \left[\int_\mathbb{R} ~ \Delta_{t}(x_1+u) \Delta_{t}(x_2-u) |u|^{-\beta}  A_{2,T}(u) ~du \right].
\end{equation}
Then $
Z_T(t)\EqFDD Z_T'(t),$
that is, the processes $Z_T(t)$ and $Z_T'(t)$ have the same finite-dimensional distributions.
\end{Lem}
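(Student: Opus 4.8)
The plan is to represent $Q_T(t)-\E[Q_T(t)]$ as an off-diagonal double Wiener--It\^o integral against the spectral random measure of $X$, and then to obtain $Z_T'(t)$ from it by a deterministic change of variables combined with the self-similarity of the Gaussian random measure $W$.

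First I would invoke the spectral representation $X(t)=\int_{\mathbb{R}}e^{itx}\sqrt{f(x)}\,W(dx)$, where $W$ is the complex Gaussian random measure with Lebesgue control measure associated with $X$ (Hermitian, $\overline{W(dx)}=W(-dx)$, so that $X$ is real). Applying the product formula for first-order integrals, the contraction term is exactly the covariance $\E[X(u)X(v)]=r(u-v)$, so the centered product is the purely off-diagonal integral
\[
:X(u)X(v):\ =\int_{\mathbb{R}^2}{}'' e^{iux_1+ivx_2}\sqrt{f(x_1)f(x_2)}\,W(dx_1)W(dx_2),
\]
where the prime excludes the diagonal $x_1=-x_2$ produced by the contraction. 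Since the kernels below are in $L^2(\mathbb{R}^2)$ by Lemma \ref{Lem:L^2} and $\{x_1=x_2\}$ is Lebesgue-null, also excluding $x_1=+x_2$ as in (\ref{eq:rewrite}) changes nothing almost surely.

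Next I would substitute this into $Q_T(t)-\E[Q_T(t)]=\int_0^{Tt}\int_0^{Tt}\widehat g(u-v):X(u)X(v):\,du\,dv$ and apply a stochastic Fubini theorem to interchange the deterministic $du\,dv$ integration with the double Wiener--It\^o integral (legitimate once the relevant $L^2$ norms are seen to be finite). Writing $\widehat g(u-v)=\int_{\mathbb{R}}e^{i(u-v)w}g(w)\,dw$ separates the $u$- and $v$-integrals, and recalling $\Delta_s(x)=\int_0^s e^{i\sigma x}\,d\sigma$ from (\ref{eq:Delta_t}), this yields
\[
Q_T(t)-\E[Q_T(t)]=\int_{\mathbb{R}^2}{}'' K_{t,T}(x_1,x_2)\sqrt{f(x_1)f(x_2)}\,W(dx_1)W(dx_2),\qquad K_{t,T}(x_1,x_2)=\int_{\mathbb{R}}g(w)\,\Delta_{Tt}(x_1+w)\Delta_{Tt}(x_2-w)\,dw.
\]
The crux is then the scaling step: I would change variables $x_j=\xi_j/T$ in the outer integral and $w=u/T$ in the inner one, and use the self-similarity of $W$, namely that $\xi\mapsto W(d\xi/T)$ has control measure $T^{-1}d\xi$ and hence equals $T^{-1/2}W(d\xi)$ in distribution, contributing a factor $T^{-1}$ to the double integral. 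Inserting $f(\xi/T)=T^{\alpha}|\xi|^{-\alpha}L_1(\xi/T)$, $g(u/T)=T^{\beta}|u|^{-\beta}L_2(u/T)$, and the identity $\Delta_{Tt}(y/T)=T\,\Delta_t(y)$, all powers of $T$ collect to exactly $T^{\alpha+\beta}$. Dividing by the normalization $T^{\alpha+\beta}L_1(1/T)L_2(1/T)$ of (\ref{eq:Z_T}), the surviving ratios $\sqrt{L_1(\xi_1/T)L_1(\xi_2/T)}\big/L_1(1/T)$ and $L_2(u/T)/L_2(1/T)$ are precisely $A_{1,T}$ and $A_{2,T}$ of (\ref{eq:A}), and the kernel assembles into $H_{t,T}$ of (\ref{eq:H_t,T}), giving $Z_T(t)\EqD Z_T'(t)$ for each fixed $t$. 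Because the self-similarity of $W$ is a single distributional identity applied simultaneously to the whole family of kernels indexed by $t$, the equality is joint, which upgrades it to $Z_T(t)\EqFDD Z_T'(t)$.

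I expect the main obstacle to be the careful bookkeeping of the change of variables and self-similarity, i.e.\ confirming that every power of $T$ cancels against the stated normalization and that the leftover slowly varying ratios are exactly $A_{1,T}$ and $A_{2,T}$, together with the rigorous justification of the stochastic Fubini interchange and the well-definedness of each double integral, all of which rest on the $L^2$-integrability guaranteed by Lemma \ref{Lem:L^2}.
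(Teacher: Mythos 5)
Your proposal is correct and takes essentially the same route as the paper's proof: the spectral representation of $X$ with the diagram/product formula to write the centered product $:X(u)X(v):$ as an off-diagonal double Wiener--It\^o integral, a stochastic Fubini interchange justified by Lemma \ref{Lem:L^2}, and the change of variables $w=u/T$, $x_j\to x_j/T$ together with the scaling identity $W(dx/T)\EqD T^{-1/2}W(dx)$, under which the powers of $T$ collect to $T^{\alpha+\beta}$ and the slowly varying ratios assemble into $A_{1,T}$ and $A_{2,T}$, exactly as in the paper's display (\ref{eq:M2}). Your two added remarks --- that also excluding the Lebesgue-null diagonal $x_1=x_2$ is harmless, and that the scaling is a single joint distributional identity across all $t$, which upgrades the equality to $\EqFDD$ --- merely make explicit points the paper leaves implicit.
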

\begin{proof}
Using the spectral representation of $X(t)$ (see, e.g., \cite{doob:1953:stochastic}, Chapter XI, Section 8):
$
X(t)=\int_{\mathbb{R}} e^{itx} \sqrt{f(x)}  W(dx),
$
where $W(\cdot)$ is a complex Gaussian measure with Lebesgue control measure,
 and the diagram formula (see, e.g.,  \cite{major:1981:multiple}, Chapter 5), we have
\[
X(u)X(v)-\E \left[X(u)X(v)\right]=\int_{\mathbb{R}^2}'' e^{i(ux_1+vx_2)}\sqrt{f(x_1)f(x_2)} W(dx_1)W(dx_2).
\]

By a stochastic   Fubini Theorem (see \cite{pipiras:taqqu:2010:regularization},  Theorem 2.1) and Lemma \ref{Lem:L^2}, one can change the integration order to get (note that by (\ref{eq:g hat}) we have $\widehat{g}(t)=\int_{\mathbb{R}}e^{itx}g(x)dx$):
\begin{align*}
Z_T(t)=&\frac{1}{T^{\alpha+\beta} L_1(1/T)L_2(1/T)}
\int_{\mathbb{R}^2}'' \sqrt{f(x_1)f(x_2)} ~\int_0^{Tt} \int_0^{Tt} \int_{\mathbb{R}}e^{i(u-v)w}g(w)dw~ e^{i(ux_1+vx_2)} du dv~W(dx_1)W(dx_2)\\
=& \frac{1}{T^{\alpha+\beta} L_1(1/T)L_2(1/T)}
\int_{\mathbb{R}^2}'' \sqrt{f(x_1)f(x_2)} ~\int_{\mathbb{R}}~\int_0^{Tt} e^{iu(x_1+w)} du\int_0^{Tt} e^{iv(x_2-w)} dv ~ |w|^{-\beta} L(w) dw  ~ W(dx_1)W(dx_2)
\\
=&\frac{1}{T^{\alpha+\beta} L_1(1/T)L_2(1/T)} \int_{\mathbb{R}^2}'' \sqrt{f(x_1)f(x_2)} \int_{\mathbb{R}}~ \Delta_{Tt}(x_1+w) \Delta_{Tt}(x_2-w)|w|^{-\beta}L_2(w)dw ~ W(dx_1)W(dx_2).
\end{align*}
Now we use the change of variables $w\rightarrow u/T$, $x_1\rightarrow x_1/T$, $x_2\rightarrow x_2/T$, where the latter two change of variables are subject to the rule $W(dx/T)\EqD T^{-1/2}W(dx)$ (see, e.g., \cite{dobrushin:1979:gaussian}, Proposition 4.2), to obtain
\begin{align}\label{eq:M2}
\nonumber
Z_T(t)\EqFDD&\frac{1}{T^{\alpha+\beta} L_1(1/T)L_2(1/T)}\times
\\&  \int_{\mathbb{R}^2}'' \sqrt{f(x_1/T)f(x_2/T)} \int_{\mathbb{R}}~ \Delta_{t}(x_1+u) \Delta_{t}(x_2-u)|w/T|^{-\beta}L_2(w/T)
T dw ~ T^{-1}W(dx_1)W(dx_2).
\end{align}
Taking into account the equality $f(x/T)=|x/T|^{-\alpha}L_1(x/T)$ and equations in (\ref{eq:A}), we see that the right hand side of (\ref{eq:M2}) coincides with (\ref{eq:rewrite}). This completes the proof.
\end{proof}

The lemmas that follow will be used to establish tightness in the space $C[0,1]$ in Theorem \ref{Thm:NCLT}.
%The next lemma is used for establishing tightness in Theorem \ref{Thm:NCLT}.
\begin{Lem}\label{Lem:square increment}
Let $\delta$ be a fixed number within the range $(0,(\alpha+\beta)/2)$,
and let $Z_T(t)$ be as in (\ref{eq:Z_T}). Then for all $0\le s\le t\le 1$ and $T$  large enough, there exists a constant $C>0$, such that
\begin{equation}\label{eq:increment est}
\E\left[|Z_T(t)-Z_T(s)|^2\right] \le C (t-s)^{2\delta}.
\end{equation}
The same estimate also holds for the corresponding limiting process $Z(t)$ defined by (\ref{eq:limit proc}), (\ref{eq:H_t}).
\end{Lem}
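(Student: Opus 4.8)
The plan is to reduce the increment estimate to an $L^2$-bound on the difference of the spectral kernels via the Wiener–It\^o isometry, and then to extract the factor $(t-s)^{2\delta}$ from the difference of the $\Delta$-factors using Lemma \ref{Lem:bound delta}.

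First I would pass to the spectral-domain representation. By Lemma \ref{Lem:rewrite}, $Z_T(t)\EqFDD Z_T'(t)$, hence $\E[|Z_T(t)-Z_T(s)|^2]=\E[|Z_T'(t)-Z_T'(s)|^2]$, and
\[
Z_T'(t)-Z_T'(s)=\int_{\mathbb{R}^2}'' \big(H_{t,T}(x_1,x_2)-H_{s,T}(x_1,x_2)\big)\,W(dx_1)W(dx_2).
\]
Since symmetrization is a contraction in $L^2(\mathbb{R}^2)$, the isometry for double Wiener–It\^o integrals gives $\E[|Z_T'(t)-Z_T'(s)|^2]\le 2\|H_{t,T}-H_{s,T}\|_{L^2(\mathbb{R}^2)}^2$, so it suffices to bound this $L^2$-norm.

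Next I would treat the pointwise difference of kernels. Using $\Delta_t(x)-\Delta_s(x)=e^{isx}\Delta_{t-s}(x)$ and telescoping,
\[
\Delta_t(x_1+u)\Delta_t(x_2-u)-\Delta_s(x_1+u)\Delta_s(x_2-u)
=\Delta_t(x_1+u)e^{is(x_2-u)}\Delta_{t-s}(x_2-u)+e^{is(x_1+u)}\Delta_{t-s}(x_1+u)\Delta_s(x_2-u).
\]
Applying Lemma \ref{Lem:bound delta} to each factor — bounding $|\Delta_t|,|\Delta_s|\le c f_\delta$ and $|\Delta_{t-s}|\le c\,(t-s)^\delta f_\delta$, which is legitimate because $s,t,t-s\in[0,1]$ — yields
\[
|H_{t,T}(x_1,x_2)-H_{s,T}(x_1,x_2)|\le C(t-s)^\delta\,|A_{1,T}(x_1,x_2)|\,|x_1x_2|^{-\alpha/2}\int_{\mathbb{R}} f_\delta(x_1+u)f_\delta(x_2-u)|u|^{-\beta}|A_{2,T}(u)|\,du.
\]
The function multiplying $(t-s)^\delta$ on the right is exactly the $f_\delta$-majorant already shown to be square-integrable in the proof of Lemma \ref{Lem:L^2}: invoking the Potter bounds (\ref{eq:potter app 1})–(\ref{eq:potter app 2}) and Lemma \ref{Lem:power counting} (with the Potter $\epsilon$ taken small, using $\delta<(\alpha+\beta)/2$), its $L^2$-norm is finite and, since the Potter constant does not depend on $T$, bounded uniformly in $T$ for $T$ large. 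Thus $\|H_{t,T}-H_{s,T}\|_{L^2}^2\le C(t-s)^{2\delta}$, giving (\ref{eq:increment est}). For the limit process, $H_t$ in (\ref{eq:H_t}) is the case $A_{1,T}\equiv A_{2,T}\equiv 1$, so the same telescoping produces $|H_t-H_s|\le C(t-s)^\delta|x_1x_2|^{-\alpha/2}\int_{\mathbb{R}} f_\delta(x_1+u)f_\delta(x_2-u)|u|^{-\beta}du$, whose square-integrability is Lemma \ref{Lem:L^2 1}; the identical isometry argument then gives the estimate for $Z(t)$.

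The only genuinely new ingredient is the telescoping decomposition that isolates $(t-s)^\delta$; everything else reduces to the already-established integrability lemmas. The point that requires care is ensuring the $L^2$-bound is \emph{uniform} in $T$, so that the constant $C$ in (\ref{eq:increment est}) is independent of $T$ — this rests on the uniformity of the Potter constant — together with checking that the strict inequality $\delta<(\alpha+\beta)/2$ leaves enough room to absorb the Potter exponent $\epsilon$ into the power-counting hypothesis of Lemma \ref{Lem:power counting}.
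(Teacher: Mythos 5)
Your proposal is correct and follows essentially the same route as the paper's proof: reduce to $Z_T'$ via Lemma \ref{Lem:rewrite}, use the Wiener--It\^o isometry, extract $(t-s)^\delta$ from the kernel difference via Lemma \ref{Lem:bound delta}, and control the resulting $f_\delta$-majorant uniformly in $T$ through the Potter bounds and the power-counting Lemma \ref{Lem:power counting}. The only cosmetic difference is your two-term telescoping $A_tB_t-A_sB_s=A_t(B_t-B_s)+(A_t-A_s)B_s$, where the paper splits $[0,t]^2\setminus[0,s]^2$ into three rectangles, yielding an equivalent three-term bound.
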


\begin{proof}
First, in view of Lemma \ref{Lem:rewrite}, we have $\E\left[|Z_T(t)-Z_T(s)|^2\right]=\E\left[|Z_T'(t)-Z_T'(s)|^2\right]$.
Next, using the linearity of the multiple stochastic integral, we can write
\[
Z_T'(t)-Z_T'(s)= \int_{\mathbb{R}^2}'' H_{s,t,T}(x_1,x_2) W(dx_1)W(dx_2),
\]
where
\begin{align}\label{eq:H_{s,t,T}}
H_{s,t,T}(x_1,x_2)=
A_{1,T}(x_1,x_2)|x_1x_2|^{-\alpha/2} \int_{\mathbb{R}} \left[\Delta_t(x_1+u)\Delta_t(x_2-u)-\Delta_s(x_1+u)\Delta_s(x_2-u)\right]|u|^{-\beta} A_{2,T}(u)du.
\end{align}
The term in the brackets of the integrand in (\ref{eq:H_{s,t,T}}) can be rewritten as follows:
\begin{align*}
&\Delta_t(x_1+u)\Delta_t(x_2-u)-\Delta_s(x_1+u)\Delta_s(x_2-u)\\=&\int_0^t\int_0^t
e^{iw_1(x_1+u)} e^{iw_2(x_2-u)}dw_1dw_2-\int_0^s\int_0^s
e^{iw_1(x_1+u)} e^{iw_2(x_2-u)}dw_1dw_2       \\
=&\int_0^sdw_1\int_s^t dw_2\ldots +\int_s^tdw_1\int_0^s dw_2\ldots+\int_s^t dw_1\int_s^t dw_2\ldots\\
=&\Delta_s(x_1+u)\Delta_{t-s}(x_2-u)+\Delta_{t-s}(x_1+u)\Delta_s(x_2-u)+\Delta_{t-s}(x_1+u)\Delta_{t-s}(x_2-u).
\end{align*}

Now we apply Lemma \ref{Lem:bound delta} to get
\begin{align}
|\Delta_t(x_1+u)\Delta_t(x_2-u)-\Delta_s(x_1+u)\Delta_s(x_2-u)|&\le
C[s^{\delta}(t-s)^{\delta}+(t-s)^{\delta}s^{\delta}+(t-s)^{2\delta}] f_\delta(x_1+u)f_\delta(x_2-u)\notag\\
&\le C (t-s)^{\delta} f_\delta(x_1+u)f_\delta(x_2-u),\label{eq:bound delta diff}
\end{align}
where the last inequality follows because $0\le s^\delta\le 1$ and $0\le (t-s)^\delta\le 1$.

Next, using  formula (4.5$'$) of \cite{major:1981:multiple}, (\ref{eq:H_{s,t,T}}) and (\ref{eq:bound delta diff}), we can write
\begin{align}
&\E\left[ |Z_T(t)-Z_T(s)|^2\right]= \|H_{s,t,T}\|_{L^2(\mathbb{R}^2)}^2
\le  C|t-s|^{2\delta}\int_{\mathbb{R}^2} dx_1dx_2 A_{1,T}(x_1,x_2)^2 |x_1x_2|^{-\alpha} \times \notag
\\&\int_{\mathbb{R}^2}du_1du_2 f_\delta(x_1+u_1)f_\delta(x_2-u_1) f_\delta(-x_1+u_2)f_\delta(-x_2-u_2) |u_1|^{-\beta} |u_2|^{-\beta} A_{2,T}(u_1)A_{2,T}(u_2)\notag\\
\le & C|t-s|^{2\delta}\int_{\mathbb{R}^4} dy_1dy_2dy_3dy_4 A_{1,T}(y_1,y_3)^2 A_{2,T}(y_2)A_{2,T}(y_4)   \times \notag
\\& f_\delta(y_1-y_2) f_\delta(y_2-y_3)f_\delta(y_3-y_4) f_\delta(y_4-y_1)|y_1|^{-\alpha}|y_2|^{-\beta} |y_3|^{-\alpha} |y_4|^{-\beta}, \label{eq:increment est intermediate}
\end{align}
where we have applied the change of variables: $y_1=x_1$, $y_2=-u_1$, $y_3=-x_2$, $y_4=u_2$.

Since by assumption $\alpha<1$, $\beta<1$ and $\alpha+\beta>1/2$, and the exponent $\epsilon$  in (\ref{eq:potter app 1}) and  (\ref{eq:potter app 2}) can be chosen arbitrarily small,  for a fixed  $\delta$ satisfying $0<\delta<(\alpha+\beta)/2$, we can apply Lemma 1 of \cite{terrin:taqqu:1990:noncentral} to conclude that the integral
\[
\int_{\mathbb{R}^4} d\mathbf{y} A_{1,T}(y_1,y_3)^2 A_{2,T}(y_2)A_{2,T}(y_4)    f_\delta(y_1-y_2) f_\delta(y_2-y_3)f_\delta(y_3-y_4) f_\delta(y_4-y_1)|y_1|^{-\alpha}|y_2|^{-\beta} |y_3|^{-\alpha} |y_4|^{-\beta}
\]
is bounded for sufficiently large  $T$, which in view of (\ref{eq:increment est intermediate}) implies (\ref{eq:increment est}). The proof for $Z_T(t)$ is thus complete.
The proof for $Z(t)$ is similar and so we omit the details.
\end{proof}

\section{Proof of Main Results}\label{sec:proof of main}

\begin{proof}[Proof of Theorem \ref{Thm:CLT}]
By Lemma \ref{Lem:approx},  for any $0\le t_1<\ldots < t_n$, and constants $c_1,\ldots,c_n$, we have
\[
\lim_{T\rightarrow \infty}\Var\left[\sum_{j=1}^n  c_j \left(\widetilde Q_T(t_j) - L_T(t_j) \right)\right]=0.
\]
Therefore the convergence of finite-dimensional distributions of $\widetilde Q_T(t)$ to that of Brownian motion $\sigma B(t)$ follows from Lemma  \ref{Lem:Y(t) CLT} with $f_Y(\cdot)$ given in (\ref{eq:f_Y}) and the Cram\'er-Wold Device.
\end{proof}

\begin{proof}[Proof of Theorem \ref{Thm:CLT C[0,1]}]
In view of the well-known Prokhorov's Theorem (see, e.g., \cite{Billingsley}, p.\ 58),
to prove the theorem, we need to show convergence of finite-dimensional distributions and tightness.
The former has been established in Theorem \ref{Thm:CLT}.
To prove tightness, observe that by Lemma \ref{Lem:square increment CLT} and the
hypercontractivity inequality of the multiple Wiener-It\^o integrals
(see \cite{major:1981:multiple}, Corollary 5.6),
for any $T>0$ and  $0\le s\le t\le 1$, there exists a constant $C>0$ to satisfy
\begin{align}\label{eq:fourth moment est CLT}
\E \left[|\widetilde Q_{T}(t)-\widetilde Q_T(s)|^4\right]
\le C_2 \left(\E\left[|\widetilde Q_T(t)-\widetilde Q_T(s)|^2\right] \right)^2 \le C  (t-s)^{2}.
\end{align}
Now the tightness of the family of measures generated by the processes
$\{\widetilde Q_T(t):T>0\}$ in $C[0,1]$ follows from Lemma 5.1 of \cite{I}.
\end{proof}

\begin{proof}[Proof of Theorem \ref{Thm:CLT C[0,1] B}]
The convergence of finite-dimensional distributions follows from Theorem \ref{Thm:CLT}.
In fact, the assumptions on $f$ and $g$ in Theorem \ref{Thm:CLT C[0,1] B}
imply the conditions (\ref{eq:CLT condition 1}) and (\ref{eq:CLT condition 2})
in Theorem \ref{Thm:CLT}  (see the proof of Theorem 5 in \cite{GS2}).
The tightness can be shown similarly as in the proof of Theorem \ref{Thm:CLT C[0,1]}.
\end{proof}

\begin{proof}[Proof of Theorem \ref{Thm:NCLT}]

As in the proof of Theorem \ref{Thm:CLT C[0,1]}, we need to show convergence
of finite-dimensional distributions and tightness.
We first prove the convergence of finite-dimensional distributions, that is,
$Z_{T}(t)\ConvFDD Z(t)$ as $T\rightarrow\infty$, where $Z_{T}(t)$ and $Z(t)$
are defined by (\ref{eq:Z_T}) and (\ref{eq:limit proc}), respectively.

By Lemma \ref{Lem:rewrite}, the process $Z_T(t)$ defined in (\ref{eq:Z_T}) has the same
finite-dimensional distributions as the process $Z_T'(t)$ defined in (\ref{eq:rewrite}).
Therefore, taking into account the linearity of multiple Wiener-It\^o integral,
 and applying Cr\'amer-Wold device,
to prove $Z_{T}(t)\ConvFDD Z(t)$, it is enough to show that as $T\rightarrow\infty$,
\begin{equation}\label{eq:goal}
H_{t,T}(x_1,x_2) ~ {\rightarrow}~ H_t(x_1,x_2) \quad\text{ in }\quad L^2(\mathbb{R}^2),
\end{equation}
where $H_t(x_1,x_2)$ and $H_{t,T}(x_1,x_2)$ are as in (\ref{eq:H_t}) and (\ref{eq:H_t,T}),
respectively.

First, we show pointwise convergence for a.e.\  $(x_1,x_2)\in\mathbb{R}^2$,
that is,
\begin{align}
H_{t,T}(x_1,x_2)&= A_{1,T}(x_1,x_2) |x_1x_2|^{-\alpha/2}\int_{\mathbb{R}} \Delta_{t}(x_1+u) \Delta_{t}(x_2-u) |u|^{-\beta}A_{2,T}(u)du\label{eq:H_t,T conv} \\
& \rightarrow H_t(x_1,x_2)= |x_1x_2|^{-\alpha/2}\int_{\mathbb{R}} \Delta_{t}(x_1+u) \Delta_{t}(x_2-u) |u|^{-\beta} du\quad \text{as $T\rightarrow\infty$.}\label{eq:H_t,T limit}
\end{align}
Because $L_1(x)$ is a slowly varying function, we have $A_{1,T}(x_1,x_2)\rightarrow 1$ as $T\rightarrow\infty$, where $A_{1,T}$ is as in (\ref{eq:A}). To show that the integral in (\ref{eq:H_t,T conv}) converges to the integral in (\ref{eq:H_t,T limit}), note first that by (\ref{eq:A}), $A_{2,T}(u)\rightarrow 1$ as $T\rightarrow\infty$ because $L_2(x)$ is  a slowly varying function. Hence one only needs to bound the integrand properly and apply the Dominated Convergence Theorem.  To this end, observe that by
(\ref{eq:potter app 2}) for $T$ large enough, we have
\begin{align}\label{eq:g_T}
g_T(u;x_1,x_2):&=|\Delta_{t}(x_1+u)| |\Delta_{t}(x_2-u)| |u|^{-\beta}A_{2,T}(u)\\&\le C  |\Delta_{t}(x_1+u)| |\Delta_{t}(x_2-u)| |u|^{-\beta} (|u|^\epsilon+|u|^{-\epsilon}):=g_\epsilon(u;x_1,x_2).
\end{align}
By choosing $\epsilon$ small enough, using Fubini Theorem and Lemma \ref{Lem:L^2 1},
%the fact that $H_t^*$ defined by (\ref{eq:H_t*}) is in $L^2$ for $(\alpha,\beta)$ in the open region $\{(\alpha,\beta): \alpha,\beta<1, \alpha+\beta>1/2\}$,
we conclude  that $g_\epsilon(\cdot\,;x_1,x_2)\in L^1(\mathbb{R})$ for a.e. $(x_1,x_2)\in \mathbb{R}^2$.
Now (\ref{eq:goal}) follows from (\ref{eq:bound L^2}) and the Dominated Convergence Theorem.

To prove tightness, first observe that by the
hypercontractivity inequality of the multiple Wiener-It\^o integrals
(see \cite{major:1981:multiple}, Corollary 5.6) and Lemma \ref{Lem:square increment},
for $T$ large enough and for any $0\le s\le t\le 1$, there exists a constant $C>0$ to satisfy
\begin{align}\label{eq:fourth moment est}
\E \left[|Z_{T}(t)-Z_T(s)|^4\right] \le C_2 \left(\E\left[|Z_T(t)-Z_T(s)|^2\right]\right)^2 \le C  |t-s|^{4\delta},
\end{align}
where $\delta$ is a fixed number within the range $0<4\delta<2(\alpha+\beta)$. Since by assumption $\alpha+\beta>1/2$, we can choose $\delta$ to satisfy $4\delta>1$.
Inequalities similar to (\ref{eq:fourth moment est}) hold also for the limit process $Z(t)$.

In view of (\ref{eq:fourth moment est}) and a similar inequality for $Z(t)$,
it follows from Kolmogorov's criterion (see, e.g., \cite{bass:2011:stochastic}
Theorem 8.1(1)) that the processes $Z_T(t)$ and $Z(t)$ admit continuous versions
when $T$ is large enough.

Now the tightness of the family of measures generated by the processes
$\{Z_T(t): T>0\}$ in $C[0,1]$ follows from Lemma 5.1 of \cite{I}.
Theorem \ref{Thm:NCLT} is proved.
\end{proof}

\noindent\textbf{Acknowledgement} The research was partially supported by National Science Foundation
Grant \#DMS-1309009 at Boston University. We would also like to thank the referee for his comments.

\bigskip

\noindent Shuyang Bai~~~~~~~ \textit{bsy9142@bu.edu}\\
Mamikon S.　Ginovyan ~\textit{ginovyan@math.bu.edu}\\
Murad S. Taqqu ~~\textit{murad@bu.edu}\\
Department of Mathematics and Statistics\\
111 Cumminton Street\\
Boston, MA, 02215, US

\end{document}